\newtheorem{remark}{Remark}[section]
\newtheorem{lemma}{Lemma}[section]
\newtheorem{proposition}{Proposition}[section]
\newtheorem{theorem}{Theorem}[section]
\newcommand{\bF}{\mathbf{F}}    
\newcommand{\bu}{\mathbf{u}}
\newcommand{\Div}{\nabla\!\cdot\!}
\begin{document}
\title[Oscillatory and regularized shock waves for a dissipative Peregrine-Boussinesq system]{Oscillatory and regularized shock waves for a dissipative Peregrine-Boussinesq system}

\author{Larkspur Brudvik-Lindner}
%    Address of record for the research reported here
\address{\textbf{L. Brudvik-Lindner} Victoria University of Wellington, School of Mathematics and Statistics, PO Box 600, Wellington 6140, New Zealand}
\email{brudvilark@myvuw.ac.nz}

\author{Dimitrios Mitsotakis}
%    Address of record for the research reported here
\address{\textbf{D.~Mitsotakis:} Victoria University of Wellington, School of Mathematics and Statistics, PO Box 600, Wellington 6140, New Zealand}
\email{dimitrios.mitsotakis@vuw.ac.nz}

\author{Athanasios E. Tzavaras}
%    Address of record for the research reported here
\address{\textbf{A.E.~Tzavaras:} King Abdullah University of Science and Technology (KAUST), Computer Electrical and Mathematical Science and Engineering Division (CEMSE), Thuwal, 23955-6900, Saudi Arabia }
\email{athanasios.tzavaras@kaust.edu.sa}

%    General info
\subjclass[2000]{35Q35, 92C35, 35C07}

\date{\today}

%\dedicatory{This paper is dedicated to our advisors.}

\keywords{Boussinesq system, positive surge, diffusive-dispersive shock wave, undular bore, existence, convergence}

\begin{abstract}
We consider a dissipative, dispersive system of Boussinesq type, describing wave phenomena in settings where dissipation has an effect. Examples include undular bores in rivers or oceans where dissipation due to turbulence is important for their description. We show that the model system admits traveling wave solutions known as diffusive-dispersive shock waves, and we categorize them into oscillatory and regularized shock waves depending on the relationship between dispersion and dissipation. Comparison of numerically computed solutions with laboratory data suggests that undular bores are accurately described in a wide range of phase speeds. Undular bores are often described using the original Peregrine system which, even if not possessing traveling waves
tends to provide accurate approximations for appropriate time scales. To explain this phenomenon, we show that the error between the solutions of the dissipative versus the non-dissipative Peregrine systems are proportional to the dissipation times the observational time.
\end{abstract}

\maketitle

\section{Introduction}

Nonlinear, dispersive waves appear in settings where  nonlinear effects are relevant and waves of different wavelengths travel at different speeds. 
Examples abound in oceanic waves, atmospheric waves, including the morning glory of Australia \cite{CSR1981}, tidal bores in oceans and rivers \cite{Ch2012} and pulses in viscoelastic vessels, \cite{mitsotakis2019}.
In all these cases, traveling waves seem to play a fundamental role manifesting themselves via a wave front followed by undulations. The front is decreasing to rest, while the undulations appear as  progressively decreasing in amplitude oscillations that expand backwards for long distances. Such waves are usually called undular bores, positive surges or Favre waves \cite{Favre1935,BL1954,GP1987}. 

Undular bores have been studied using unidirectional nonlinear dispersive equations such as the Benjamin-Bona-Mahony (BBM)  equation
(also called Regularized Long Wave (RLW) equation) or the Korteweg-de-Vries (KdV) equation \cite{Pere1966,BS1985,EH2016,EHS2017}, 
as well as bidirectional nonlinear dispersive equations such as the Serre equations \cite{EGS2006, EGS2008} or  the Kaup-Boussinesq (linearly ill-posed \cite{BCS2002}) system \cite{EGK2005,K1975,GP1991}. In this work we focus on the Peregrine-Boussinesq system \cite{Pere1967,Bous1871,BCS2002} with flat bottom topography 
\begin{equation}\label{eq:peregrine}
    \begin{aligned}
        &\eta_t + D u_x + (\eta u)_x = 0\ ,\\
        &u_t + g\eta_x + uu_x -  \frac{D^2}{3}u_{xxt}  = 0\ ,
    \end{aligned}
\end{equation}
where $x$, $t$ are the space and time variables, $\eta=\eta(x,t)$ denotes the free-surface elevation while $u=u(x,t)$ stands for the horizontal velocity of the fluid measured at some height above the flat bottom of depth $D$. The constant $g$ is the acceleration due to gravity. The particular system describes the propagation of long waves in both directions, \cite{BCS2002}, which provides a more accurate description of undular bores compared to its unidirectional counterparts. 

Peregrine's system \eqref{eq:peregrine} possesses solitary wave solutions that have smooth profiles and decay exponentially in space \cite{Chen1998}. 
It is well-posed in the Hadamard sense: for initial data $(\eta(x,0),u(x,0))\in H^s\times H^{s+1}$, where $H^s=H^s(\mathbb{R})$ the usual Sobolev space with $s\geq1$,  and if $\inf_{x\in\mathbb{R}}\{1+\eta(x,0)\}>0$, then for any $T>0$ there is a unique solution $(\eta,u)\in C([0,T]; H^s\times H^{s+1})$ of (\ref{eq:peregrine}),
\cite{Sch1981,Am1984,BCS2004}, and the recent work \cite{MTZ2021} for $s>1/2$. It should be noted that the inviscid Peregrine system \eqref{eq:peregrine} does not possess undular bores as traveling wave solutions but instead possesses dispersive shock waves formed by series of solitary waves followed by dispersive tails, making the explanation of undular bores via this system (alone) questionable.

It has been experimentally confirmed that dissipation due to turbulence plays a role in the shape formation of undular bores \cite{S1965}, in addition to nonlinearity and dispersion, which are of course unavoidable components of water waves. Turbulence is expected to produce dissipation but  its effect on the shape of undular bores is known only via experimental evidence \cite{KC2008,KC2009}. The objective of this work is to test the effect of dissipation on the Peregrine system and to show that it is a necessary ingredient for the existence of traveling waves. 
We consider the system
\begin{equation}\label{eq:boussinesqs} 
    \begin{aligned}
        &\eta_t + Du_x + (\eta u)_x = 0\ ,\\
        &u_t + g\eta_x + uu_x - \frac{D^2}{3} u_{xxt} - \varepsilon u_{xx} = 0\ ,
    \end{aligned}
\end{equation}
where $\varepsilon$ is a positive constant, in order to study the effect of viscosity on (\ref{eq:peregrine}). The system \eqref{eq:boussinesqs}
can be derived to describe free-surface water waves of long wavelength and small amplitude from dissipation approximations of potential flow \cite{JW2004} using asymptotic expansions as demonstrated in \cite{DD2007}, see also Appendix \ref{appA}.
Similar Boussinesq systems are derived in other settings for laminar flows with parabolic velocity profile, \cite{DD2007,mitsotakis2019}, 
see also \cite{LO2004,HL2015,DG2016}.  The selection of the parameter $\varepsilon$ relies on experimentation because the dissipative effects are known only via experimental evidence and are not well-formulated in mathematical terms \cite{KC2008,KC2009}. Note that the term $u_{xx}$ describes the bulk damping in fluid's body. In more realistic situations, dissipation due to boundary layers and bottom friction must be included via the heuristic Manning's frictional term $c_mu|u|/(D+\eta)^{1/3}$, \cite{dkm2011}. Inclusion of such term will result eventually in the dissipation of any solution in time as it is demonstrated experimentally in \cite{BPS1981}.

We  study traveling wave solutions \eqref{eq:boussinesqs}  and their dependence as functions
of the parameters $\varepsilon$ and $\delta=\tfrac{D^2}{3}$. 
Such traveling waves often approximate shocks and they are called diffusive-dispersive shock waves \cite{EHS2017}. 
We show their shape depends on the balance among dispersion and dissipation:  (i) When dispersion
is weak, then the wave is monotone. (ii)  When the dispersion is moderate 
then the traveling waves consist of undulations following the traveling front.

Our results are motivated and the techniques parallel the analysis of \cite{BS1985} where
existence and the shape of traveling waves is studied for  a  KdV-Burgers equation. But there is a difference; as the first equation has no dissipation
it leads to a singularity in the differential equations describing the traveling waves, that needs to be analyzed
and causes the free surface elevation to have steep gradients for large speeds;
this phenomenon is not present in the KdV-Burgers equation of \cite{BS1985}.
It is convenient to consider a scaled variant of (\ref{eq:peregrine}), with $g=1$ and $\delta=D^2/3$, where the speed $c$ 
is scaled by $\sqrt{gD}$ and leads to the Froude numbe.  In the long wave limit we expect that traveling waves propagate with scaled speed $c>1$.
We show that the system (\ref{eq:boussinesqs}) has a unique up to horizontal translations traveling wave solution that resembles positive surge for each (scaled by $\sqrt{gD}$) phase speed $c>1$. These traveling waves are oscillatory shock waves when $\varepsilon^2<4\delta c\alpha(c)$ and regularized shock waves when $\varepsilon^2\geq 4\delta c\alpha(c)$ where
$$\alpha(c)=\frac{c - \sqrt{ c^2+8}}{2}+\frac{4 c}{(c - \sqrt{ c^2+8})^2} \ .$$ The scaled speed $c$ is referred in the literature as the Froude number.  
There are two asymptotic regimes as $\delta,\varepsilon \to 0$.  The weak dispersion regime $\delta < \frac{\varepsilon^2}{4 c \alpha(c)}$, characterized by the
eigenvalues of a linearized operator being real, where the traveling wave converges monotonically to a shock of the associated shallow water system. 
Moreover, combined with the analysis in \cite{BMT2023}, there is a moderate dispersion regime $ \frac{\varepsilon^2}{4 c \alpha(c)} < \delta \le o(\varepsilon)$, 
where  the diffusive-dispersive traveling wave still converges to a classical isentropic shock
but the tail exhibits an oscillatory wave-train that shrinks as $\delta,\varepsilon \to 0$.

From a physical point of view, fitting solutions of the dissipative Boussinesq system with laboratory data suggest the use of certain values for the parameter 
$\varepsilon$ depending on the Froude number $c$. In particular, when $c<1.3$ very little dissipation ($\varepsilon=0.05$) is adequate for the accurate description of undular bores. For larger values of $c$ when dissipation due to wave breaking becomes dominant, then values $\varepsilon=O(1)$ are necessary for a good approximation of breaking waves even for $c=1.45$, while for $c=1.6$ the current theory fails to predict accurately the fully-breaking bore.

Existence of smooth solutions has been proved for dissipative Boussinesq systems, \cite{AAG2007}.
In a final section we compare smooth solutions of the two systems (\ref{eq:peregrine}) and (\ref{eq:boussinesqs}) and show
that solutions emanating  from the same initial data  remain close with error $O(\varepsilon t)$. We note that the Peregrine system does not possess
traveling wave solutions of undular bore type, yet it is commonly used in practice for the description of undular bores.
The error estimate explains the accuracy of the asymptotic models in describing dispersive shock waves for time scales of $O(1/\varepsilon)$. Due to the fact that the Peregrine system along with all the other Boussinesq systems of \cite{BCS2002} are asymptotically equivalent with the same error estimates against the Euler equations, we expect that all these system have the same capabilities to approximate undular bores.

The structure of this paper is as follows: In Section \ref{sec:existence} we prove the existence and uniqueness of the traveling wave solutions for the system (\ref{eq:boussinesqs}). We also provide estimates for the amplitude, tail and phase speed of the waves. In Section \ref{sec:shape} we describe two different types of traveling waves and we give the exact criterion for acquiring regularized shock waves to the non-dispersive shallow water equations by dispersive-dissipative regularization. Section \ref{sec:errors}  provides the error estimate between systems (\ref{eq:peregrine}) and (\ref{eq:boussinesqs}). The article closes with conclusions and an Appendix containing a brief review of the derivation of the dissipative Boussinesq system (\ref{eq:boussinesqs}).

%\marginnote{\tcr{comments}}

\section{Existence of traveling wave solutions}\label{sec:existence}

In this section we study the existence and uniqueness  (up to horizontal translations) of traveling wave solutions to the following dissipative Peregrine-Boussinesq system
\begin{equation}\label{eq:boussinesqs2} 
    \begin{aligned}
        &\eta_t + u_x + (\eta u)_x = 0\ ,\\
        &u_t + \eta_x + uu_x - \delta u_{xxt} - \varepsilon u_{xx} = 0\ ,
    \end{aligned}
\end{equation}
We will be considering traveling wave solutions to the Boussinesq system (\ref{eq:boussinesqs2}) of the form
\begin{equation}\label{eq:ansatz}
    \eta(x,t) = \eta(\xi),\quad u(x,t) = u(\xi)\ ,
\end{equation}
where $\xi=x-ct$, for $c>1$. 

The following limiting conditions are imposed on $\eta$ and $u$:
\begin{equation}\label{eq:condition1}
\lim_{\xi \to -\infty} (\eta(\xi),u(\xi)) = (\eta^-,u^-), \quad 
\lim_{\xi \to +\infty} (\eta(\xi),u(\xi)) = (\eta^+,u^+) = (0,0)\ ,
\end{equation}
where $\eta^->0$, $u^->0$, and we also require
\begin{equation}\label{eq:condition2}
    \lim_{\xi \to \pm\infty} (\eta^{(j)}(\xi),u^{(j)}(\xi)) = 0,\quad \text{for}\quad j = 1,2,\dots\ .
\end{equation}

Substitution of (\ref{eq:ansatz}) into the system (\ref{eq:boussinesqs2}), and integrating over $[\xi, \infty)$ we obtain the algebraic relation between $\eta$ and $u$
\begin{equation}\label{eq:relation1}
\eta = \frac{u}{c-u}\ , 
\end{equation}
and the ordinary differential equation
\begin{equation}\label{eq:difeq}
-cu + \eta + \frac{1}{2}u^2 + \delta cu'' - \varepsilon u' = 0\ ,
\end{equation}
where $'$ stands for the usual derivative $d/d\xi$.
Substitution of (\ref{eq:relation1}) into (\ref{eq:difeq}) gives the second order equation
\begin{equation}\label{eq:main}
    -cu - \frac{u}{u-c} + \frac{1}{2}u^2 + \delta cu'' - \varepsilon u' = 0\ .
\end{equation}
\begin{remark}\label{rem:hypothesis1}
Note that for $\eta(\xi)>0$ and $u(\xi)>0$ we have from (\ref{eq:relation1}) that $u(\xi)-c<0$ for all $x\in \mathbb{R}$. Thus, $\lim_{\xi \to-\infty}(u(\xi)- c)<0$ in order for $\lim_{\xi\to -\infty}\eta(\xi)<\infty$.
\end{remark}
\begin{remark}
We exclude the case where $\eta<0$ because we look for solutions that satisfy the asymptotic conditions (\ref{eq:condition1}) with $\eta^->0$. On the other hand, if it was $\eta>0$ and $u<0$, then from (\ref{eq:relation1}) we would have $u-c>0$, which for $c>1$ means that $u>1$, which contradicts with the asymptotic condition (\ref{eq:condition1}). The only acceptable case is when $\eta>0$ and $u>0$.  
\end{remark}
\begin{remark}\label{rem:hypothesis2}
If we can establish the existence of positive solutions (that is, $\eta> 0$, $u>0$, $c>1$ that satisfy  the conditions (\ref{eq:condition1}) and (\ref{eq:condition2})) then by symmetry we have that $\Tilde{u}(-\xi) = -u(\xi) <0$, $\Tilde{\eta}(-\xi) = \eta(\xi) > 0$ is also a solution to (\ref{eq:main}). This is a traveling wave that travels to the left with phase speed $\Tilde{c} = -c < -1$. For this reason, we search only for solutions with $u>0$ and $c>1$.
\end{remark}
Assuming that the equation (\ref{eq:main}) has positive solutions that satisfy the conditions (\ref{eq:condition1}) and (\ref{eq:condition2}), we estimate the magnitude of the tail of the traveling wave (undular bore). In particular, we have the following lemma:
\begin{lemma}\label{lem:asymptot1}
Let $u$ be a positive solution to (\ref{eq:main}) for any $c>1$, then  $$\lim_{\xi\to-\infty}u(\xi) = u_0:= \frac{3c -\sqrt{c^2 + 8}}{2}>0\ .$$
\end{lemma}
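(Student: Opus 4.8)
The plan is to pass to the limit $\xi\to-\infty$ in the traveling-wave equation and then solve the resulting algebraic relation. By the ansatz conditions (\ref{eq:condition1})--(\ref{eq:condition2}) the limit $u^-:=\lim_{\xi\to-\infty}u(\xi)$ exists and $u'(\xi),u''(\xi)\to0$ as $\xi\to-\infty$; moreover (\ref{eq:relation1}) gives $\eta(\xi)\to\eta^-=u^-/(c-u^-)$, the denominator being positive by Remark~\ref{rem:hypothesis1}. Letting $\xi\to-\infty$ in (\ref{eq:difeq}) (equivalently in (\ref{eq:main})) then annihilates the $\delta c\,u''$ and $\varepsilon u'$ terms and leaves the algebraic relation
$$-cu^- + \frac{u^-}{c-u^-} + \tfrac12 (u^-)^2 = 0\ .$$

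Next I would clear the denominator — legitimate since $c-u^->0$ — and cancel the common factor $u^->0$. A short computation reduces the equation to the quadratic
$$(u^-)^2 - 3c\,u^- + 2(c^2-1) = 0\ ,$$
whose roots are $\tfrac{3c\pm\sqrt{c^2+8}}{2}$. The root $\tfrac{3c+\sqrt{c^2+8}}{2}$ exceeds $c$ for every $c>0$, which violates the constraint $u^-<c$ coming from Remark~\ref{rem:hypothesis1}, so it must be discarded; hence $u^- = u_0 = \tfrac{3c-\sqrt{c^2+8}}{2}$. Finally $u_0>0$ is equivalent to $9c^2>c^2+8$, i.e. $c^2>1$, so the hypothesis $c>1$ is precisely what guarantees positivity of the claimed limit (one also checks $u_0<c$, consistent with Remark~\ref{rem:hypothesis1}).

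I do not expect a serious obstacle: the substance is the limiting procedure followed by elementary algebra. The only points deserving a little care are (i) justifying that one may pass to the limit in (\ref{eq:main}) — immediate from (\ref{eq:condition2}) — and (ii) selecting the correct root of the quadratic using the sign information $u^-<c$ rather than blindly taking both; the emergence of $c=1$ as the exact threshold for $u_0>0$ is then a built-in consistency check with the standing assumption $c>1$ rather than a difficulty.
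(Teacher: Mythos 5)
Your passage to the limit, the reduction to the quadratic $(u^-)^2-3c\,u^-+2(c^2-1)=0$, and the exclusion of the root $u_+=\tfrac{3c+\sqrt{c^2+8}}{2}>c$ via Remark~\ref{rem:hypothesis1} all match the paper. The gap is the step ``cancel the common factor $u^->0$.'' Positivity of the solution, $u(\xi)>0$ for every $\xi$, does \emph{not} imply positivity of its limit at $-\infty$: after clearing the denominator the limiting relation factors (up to a nonzero factor) as $u^-\bigl[(u^-)^2-3c\,u^-+2(c^2-1)\bigr]=0$, and $u^-=0$ is a genuine root that has to be ruled out, not cancelled. This is precisely where the paper does its real work: multiplying (\ref{eq:main}) by $u'$ and integrating over $\mathbb{R}$ yields
\begin{equation*}
\varepsilon\int_{-\infty}^{\infty}(u')^2\,d\xi \;=\; u_0+\tfrac12 c u_0^2-\tfrac16 u_0^3-c\ln\frac{c}{c-u_0}\,,
\end{equation*}
so if $u_0=0$ the right-hand side vanishes, forcing $u'\equiv 0$ and hence $u\equiv 0$, which contradicts positivity; only then is $u_0=\tfrac{3c-\sqrt{c^2+8}}{2}$ the sole surviving possibility. (The paper additionally checks that the right-hand side above is strictly positive at this $u_0$, a consistency verification you omit but which is not needed for the stated conclusion.)

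If your intent is to invoke the stipulation $u^->0$ in (\ref{eq:condition1}) as a standing hypothesis, then your cancellation is formally legitimate, but it assumes exactly the positivity of the tail that the lemma is meant to deliver and that is used downstream (e.g.\ in Lemma~\ref{lem:lem2.3}, where $\lim_{\xi\to-\infty}u=u_0>0$ is the fact that rules out negative solutions). To prove the lemma under the hypothesis ``$u$ is a positive solution'' as stated, you need the dissipation identity above, or some equivalent argument excluding $u^-=0$.
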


\begin{proof}
In equation (\ref{eq:main}), let $\xi\to-\infty$ to obtain
\begin{equation}
    -cu_0 - \frac{u_0}{u_0 - c} + \frac{1}{2}u_0^2 = 0\ .
\end{equation}
Then either $u_0 = 0$   or% $$-c -\frac{1}{u_0 - c} + \frac{1}{2}u_0 = 0\ .$$ The last equation gives 
\begin{equation}\label{eq:quadratic}
    u_0^2 - 3cu_0 + 2(c^2 - 1) = 0\ ,
\end{equation}
which has roots 
\begin{equation}\label{eq:critpo}
 u_{\pm}= \frac{3c \pm \sqrt{c^2+8}}{2}\ .
\end{equation}
Note that $c-u_0$ must be positive for positive solutions due to Remark \ref{rem:hypothesis1}. It follows from (10) that this is true (for $c>1$) only if
\begin{equation}\label{eq:leftlimit}
u_0=\frac{3c-\sqrt{c^2+8}}{2}>0\ ,
\end{equation}
excluding the possibility $u_0=u_{+}$. 

Multiplying (\ref{eq:main}) by $u'$ we obtain the identity
\begin{equation}
\left ( -\frac{c u^2}{2} - u + c \ln \frac{c}{c-u} + \frac{1}{6} u^3 + \frac{\delta c}{2} (u^\prime)^2 \right )^\prime = \varepsilon (u^\prime)^2 
\end{equation}
Integrating over the interval ($-\infty, \infty$) yields
\begin{equation}\label{eq:relation2}
    \varepsilon \int_{-\infty}^{\infty} (u')^2~d\xi = u_0+\frac{1}{2}cu_0^2- \frac{1}{6}u_0^3 - c\ln\left(\frac{c}{c-u_0}\right) \ .
\end{equation}
Note that we do not use absolute value in the logarithm since by Remark \ref{rem:hypothesis1} we have that $c-u_0>0$.
On the other hand, if $u_0 = 0$ then (\ref{eq:relation2}) implies that $u'(\xi) = 0$ for all $\xi\in\mathbb{R}$, and thus $u$ is constant, contradicting the hypothesis. Obliged to exclude the case $u_0=0$,
we substitute (\ref{eq:leftlimit}) into (\ref{eq:relation2}) to obtain for all $c>1$
$$
 \varepsilon\int_{-\infty}^{\infty}(u')^2~d\xi  =-\frac{1}{6} (2 + c^2) (-3 c + \sqrt{c^2+8}) - 
 c \ln\left[\frac{1}{4} c (c + \sqrt{c^2+8})\right]>0\ .
$$
To prove that the right-hand side is positive, set it equal to $f(c)$ and compute $f'$ and $f''$, and see that $f(1) = f'(1)=0$ and $f''(c)>0$ for $c>1$.
\end{proof}

\begin{remark}\label{rem:boundcrit}
We note that $c-1<u_0<c$ for all $c> 1$ and denote
$$\eta_0=\lim_{x\to+\infty}\eta(\xi)=\frac{u_0}{u_0-c}>0\ .$$
The speed of propagation $c$ and  the limit value $u_0$ are independent of $\varepsilon$ and $\delta$ and coincide with those of the corresponding classical shock waves for the nonlinear shallow water equations.
\end{remark}

We first estimate the local extrema of the solution $u$.

\begin{lemma}\label{lem:critical} Let $u$ be a positive, non-constant solution to (\ref{eq:main}) and suppose that $u'(x_0) = 0$ for some $x_0 \in \mathbb{R}$. Then $x_0$ is an isolated extremum. Moreover, $u(x_0)$ is a local minimum if $u(x_0)<u_0$, while $u(x_0)$ is a local maximum if $u(x_0)>u_0$.
\end{lemma}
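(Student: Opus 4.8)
The plan is to analyze equation \eqref{eq:main} at a critical point $x_0$ using the sign of $u''(x_0)$. Suppose $u'(x_0)=0$. Setting $\xi = x_0$ in \eqref{eq:main} and using $u'(x_0)=0$, I obtain
\[
\delta c\, u''(x_0) = cu(x_0) + \frac{u(x_0)}{u(x_0)-c} - \frac{1}{2}u(x_0)^2 \;=:\; -g(u(x_0))\ ,
\]
where $g(v) = -cv - \frac{v}{v-c} + \frac12 v^2$ is exactly the left-hand side of \eqref{eq:main} with the derivative terms removed. So the sign of $u''(x_0)$ is governed by the sign of $-g(u(x_0))$. First I would observe that $g(u_0)=0$ by the computation in Lemma~\ref{lem:asymptot1} (this is precisely \eqref{eq:quadratic} rewritten), and that $g$ factors: on the relevant range $0 < v < c$ one has $v - c < 0$, so $(v-c)g(v) = -(v-c)\big(cv + \tfrac{v}{v-c} - \tfrac12 v^2\big)$ simplifies to a cubic in $v$ whose roots are $v=0$, $v=u_- = u_0$, and $v = u_+ = \tfrac{3c+\sqrt{c^2+8}}{2}$. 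Since $u_+ > c$ (as $u_+ > u_0 > c-1$ and in fact $u_+ \ge \tfrac{3c}{2} > c$), on the interval $(0,c)$ the factor $(v-c)$ is negative and the cubic changes sign only at $v = u_0$. A sign check at a convenient point (e.g. near $v=0^+$, where $\tfrac{v}{v-c} \approx 0$ and $-cv$ dominates, giving $g(v) < 0$) then pins down the signs: $g(v) < 0$ for $0 < v < u_0$ and $g(v) > 0$ for $u_0 < v < c$.

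Combining, at a critical point $x_0$ with $u(x_0) < u_0$ we get $u''(x_0) = -g(u(x_0))/(\delta c) > 0$, so $x_0$ is a strict local minimum; and if $u(x_0) > u_0$ then $u''(x_0) < 0$, so $x_0$ is a strict local maximum. Note $u(x_0) \ne u_0$ at a critical point: if $u(x_0)=u_0$ and $u'(x_0)=0$, then by uniqueness for the ODE \eqref{eq:main} (rewritten as a first-order system in $(u,u')$, with right-hand side smooth near $(u_0,0)$ since $u_0 < c$) the solution would be the constant $u \equiv u_0$, contradicting non-constancy. Because $u''(x_0) \neq 0$, each such $x_0$ is a nondegenerate extremum, hence isolated — and moreover, since every critical point is a strict extremum, no two critical points can be adjacent without a third between them, giving the isolation claim cleanly.

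I expect the main obstacle to be purely bookkeeping: verifying the factorization of $g$ and the location of its roots relative to the interval $(0,c)$, and making sure the sign of $u''(x_0)$ is extracted with the correct orientation (the factor $\delta c > 0$ helps, but the sign flip coming from multiplying through by $v-c<0$ is the place to be careful). The only genuinely structural point is invoking uniqueness of solutions to \eqref{eq:main} to exclude $u(x_0)=u_0$; this is legitimate because near an equilibrium with $u_0 < c$ the vector field $(u,u') \mapsto (u', (\varepsilon u' + g(u))/(\delta c))$ is smooth, so the Picard–Lindelöf theorem applies.
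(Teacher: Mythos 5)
Your proof is correct and follows essentially the same route as the paper: evaluate \eqref{eq:main} at the critical point, read the sign of $u''(x_0)$ off the quadratic $u^2-3cu+2(c^2-1)$ whose roots are $u_\pm$ (using $0<u<c$), and rule out the degenerate case by ODE uniqueness. The only cosmetic difference is that you exclude $u(x_0)=u_0$ directly via Picard--Lindel\"of, whereas the paper excludes $u''(x_0)=0$ by the same uniqueness mechanism; the conclusions and the isolation argument coincide.
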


\begin{proof}
If $u'(x_0) = 0$ then from equation (\ref{eq:main}) we have
\begin{equation}\label{eq:extrema}
    \delta cu''(x_0) = cu(x_0) - \frac{u(x_0)}{c - u(x_0)} - \frac{1}{2}u^2(x_0)\ .
\end{equation}

Local maxima of the solution occur at points $x_0$ such that $u''(x_0)<0$. This means that $u(x_0)$ satisfies the quadratic inequality 
$$u^2(x_0)-3cu(x_0)+2(c^2-1)<0\ .$$ 
Thus, we have $u(x_0)>u_0$. Similarly, the minimum values should satisfy $u''(x_0)>0$ which yields $0<u(x_0) <u_0$.

Lastly, note that if $u''(\xi_0) = 0$, then $u$ is a constant function, contradicting the hypothesis. Thus $u''(x_0) \neq 0$ and $x_0$ is an isolated extremal point.
\end{proof}

We define a dependent variable $v = (\delta c)u'$. Then (\ref{eq:main}) is equivalent to the first order system
\begin{equation}\label{eq:firstordera}
 \begin{aligned}
        &u' = (\delta c)^{-1}v\ , \\
        &v' = cu + \frac{u}{u-c} - \frac{1}{2}u^2 + \varepsilon(\delta c)^{-1}v\ ,
    \end{aligned}
\end{equation}
which we will write in the compact form
\begin{equation}\label{eq:firstorder}
\bu'=\bF(\bu)\ ,
\end{equation}
where
$$
\bu=\begin{pmatrix}
u \\ v
\end{pmatrix},\quad\text{and}\quad \bF(\bu)=\begin{pmatrix}
(\delta c)^{-1}v \\
 cu + \frac{u}{u-c} - \frac{1}{2}u^2 + \varepsilon(\delta c)^{-1}v
\end{pmatrix}\ .
$$
The system (\ref{eq:firstorder}) has three critical points: (0,0) and ($u_{\pm}$, 0). 

When linearized about (0,0), system (\ref{eq:firstorder}) has characteristic polynomial
$$\lambda^2-\varepsilon(\delta c)^{-1}\lambda-(\delta c)^{-1}\frac{c^2-1}{c}=0\ , $$ with eigenvalues
\begin{equation}\label{eq:eigen1}
    \lambda_{\pm} = \frac{\varepsilon \pm \sqrt{\varepsilon^2 + 4\delta(c^2 - 1)}}{2\delta c}\ .
\end{equation}
Hence (0,0) is a saddle point. When the system is linearized about $\bu_{-}=(u_{-},0)$, the Jacobian matrix is $$\bF'(\bu_{-})=\begin{pmatrix}
0 & (\delta c)^{-1}\\
-\alpha(c) & \varepsilon (\delta c)^{-1}
\end{pmatrix}\ ,$$
where (recalling that $u_0=u_{-}$)
$$\alpha(c)=\frac{c - \sqrt{ c^2+8}}{2}+\frac{4 c}{(c - \sqrt{ c^2+8})^2}=u_0-c+\frac{c}{(u_0-c)^2}\ .$$
The characteristic polynomial of the Jacobian matrix is $\lambda^2-\varepsilon(\delta c)^{-1}\lambda +\alpha(c)~(\delta c)^{-1}=0$ with eigenvalues
\begin{equation}\label{eq:eigen2}
    \Lambda_{\pm}=\frac{\varepsilon\pm\sqrt{\varepsilon^2-4\delta c\alpha(c)}}{2\delta c}\ .
\end{equation}
 These eigenvalues are complex when
\begin{equation}\label{eq:relation3}
    \varepsilon^2<4\delta c\alpha(c)\ ,
\end{equation}
in which case the critical point $(u_0,0)$ is a spiral. The first derivative of the function
\begin{equation}
   \alpha(c)= \frac{c - \sqrt{ c^2+8}}{2}+\frac{4 c}{(c - \sqrt{ c^2+8})^2}\ ,
\end{equation}
is greater than zero for $c > 1$. Since $\alpha(c)$ is equal to zero at $c = 1$, we conclude that the function is increasing for $c \geq 1$, and hence the right-hand side of (\ref{eq:relation3}) is positive for $c>1$.  The real part of the eigenvalues $\Lambda_{\pm}$ is  positive and the critical point is always unstable (node or spiral).

The third critical point $u_{+} > 0$ satisfies $u_+ > c$. As we see below, solutions to (\ref{eq:firstorder}) must satisfy $u(\xi) < c$ for all $\xi \in \mathbb{R}$. Thus the critical point $u_{+}$ cannot be connected to $(0,0)$ at $+\infty$ and will be excluded from the possible asymptotic limits.

Now let $\mathcal{R}=\{(u(\xi),v(\xi)):\xi\in\mathbb{R}\}$ be any bounded orbit of the system (\ref{eq:firstorder}). The following will be a study of the asymptotic states of $\mathcal{R}$ at $+\infty$ and $-\infty$. However first note that (\ref{eq:firstorder}) has no non-constant periodic solutions. To see this, let ($u_p, v_p$) be a periodic solution to (\ref{eq:firstorder}) with period $p>0$. Then ($u_p, v_p$) is also a solution to (\ref{eq:main}). Multiplying (\ref{eq:main}) by $u'_p$ and integrating over the interval (0, $p$) yields the identity
\begin{equation}\label{eq:period}
    \varepsilon\int_{0}^{p}(u'_p)^2~d\xi  = 0\ ,
\end{equation}
since by periodicity, all terms integrate to zero except one. It follows from (\ref{eq:period}) that $u_p$ is constant. Alternatively, observe that $\Div \bF(\bu)=\varepsilon(\delta c)^{-1}>0$ for all $c>1$, and thus by Bendixson-Dulac Theorem, \cite{W2003}, system (\ref{eq:firstorder}) has no closed orbits in its phase space. This excludes also the existence of classical solitary wave solutions, which are homoclinic orbits to the origin. 

\begin{remark}\label{rem:conditions}
Let $(u, v)$ be a solution to (\ref{eq:firstorder}). Both $u(\xi)$ and $v(\xi)$ tend to 0 as $\xi \to +\infty$ so it follows from (\ref{eq:firstorder}) that $u'(\xi)$ and $v'(\xi)$ also tend to 0 as $\xi \to +\infty$. By induction, $u^{(j)}(\xi) \to 0$ and $v^{(j+1)}(\xi) = (\delta c)u^{(j)}(\xi)\to 0$ as $\xi \to +\infty$ for all $j \geq 0$.
\end{remark}

\begin{lemma}\label{lem:lem2.3} If $(u,v)$ is a solution of the system (\ref{eq:firstorder}) for $c>1$ that satisfies the asymptotic conditions (\ref{eq:condition1}), then $u$ is a positive solution in the sense that $u(\xi)>0$ and $u(\xi)<c$ for all $\xi\in \mathbb{R}$.
\end{lemma}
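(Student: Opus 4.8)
The plan is to argue by contradiction using the phase-plane structure of \eqref{eq:firstorder} together with the limiting conditions \eqref{eq:condition1}. Recall that the orbit $\mathcal{R}$ must approach $(\eta^-,u^-)$ as $\xi\to-\infty$ and $(0,0)$ as $\xi\to+\infty$; by Remark~\ref{rem:conditions} the orbit is bounded near $+\infty$, and by \eqref{eq:relation1} boundedness of $\eta$ forces $u<c$ in the limit. I would first establish the upper bound $u(\xi)<c$ for all $\xi$. Suppose not; then since $u(\xi)\to 0<c$ at $+\infty$, there is a first crossing point $\xi_1$ (reading from the right) with $u(\xi_1)=c$. At such a point the term $\tfrac{u}{u-c}$ in \eqref{eq:main} blows up: more precisely, I would examine the first-order system \eqref{eq:firstordera} near the line $u=c$. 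As $u\uparrow c$ the second component $v'$ behaves like $\tfrac{u}{u-c}\to -\infty$, so $v$ (hence $u'$) is driven to $-\infty$; a careful look shows $u$ cannot actually reach $u=c$ from below in finite $\xi$ — or, if one prefers the ODE formulation, multiply \eqref{eq:main} by $u'$ and use the identity already derived before \eqref{eq:relation2}: the term $c\ln\frac{c}{c-u}$ in the integrated energy $E(\xi)=-\tfrac{cu^2}{2}-u+c\ln\frac{c}{c-u}+\tfrac16u^3+\tfrac{\delta c}{2}(u')^2$ satisfies $E'(\xi)=\varepsilon(u')^2\ge 0$, so $E$ is nondecreasing; since $E(+\infty)$ is finite (all terms vanish or are finite as $u\to0$) and $E\to+\infty$ as $u\to c$, the orbit can never reach $u=c$. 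This yields $u(\xi)<c$ for all $\xi$.

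Next I would prove positivity, $u(\xi)>0$ for all $\xi$. Again suppose not. Since $u\to u^->0$ at $-\infty$ and $u\to 0$ at $+\infty$, with $u<c$ everywhere established, the set where $u\le 0$ is a nonempty subset bounded away from $-\infty$. I would use the monotonicity of the energy $E$ together with the behavior of $E$ as $u\to 0^+$: note $c\ln\frac{c}{c-u}\to 0$ and the polynomial part $\to 0$, so $E(\xi)\to \tfrac{\delta c}{2}(u'(\xi_\ast))^2$-type values; more directly, since $E$ is nondecreasing and $E(+\infty)=0$ (using Remark~\ref{rem:conditions} so that $u,u'\to 0$), we get $E(\xi)\le 0$ for all $\xi$, i.e.
\begin{equation}\label{eq:energyneg}
-\frac{cu^2}{2}-u+c\ln\frac{c}{c-u}+\frac16 u^3+\frac{\delta c}{2}(u')^2\le 0\quad\text{for all }\xi\ .
\end{equation}
If $u(\xi_0)\le 0$ at some point, evaluate \eqref{eq:energyneg}: for $u\le 0$ one checks that $g(u):=-\tfrac{cu^2}{2}-u+c\ln\frac{c}{c-u}+\tfrac16u^3$ is strictly positive (at $u=0$ it is $0$, and $g'(u)=-cu-1+\tfrac{c}{c-u}+\tfrac12u^2$; a sign analysis shows $g>0$ for $u<0$, using $c>1$), so \eqref{eq:energyneg} forces $(u')^2<0$, a contradiction — unless $u(\xi_0)=0$ and $u'(\xi_0)=0$, but then by uniqueness for the ODE \eqref{eq:firstorder} (the field $\bF$ is smooth away from $u=c$, and we have shown $u<c$) the solution would be identically the critical point $(0,0)$, contradicting $u^->0$. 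Hence $u(\xi)>0$ everywhere.

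The main obstacle I expect is the blow-up of the vector field along $u=c$: one must rule out the orbit reaching that line, and the clean way is precisely the monotone-energy argument above rather than a local phase-plane estimate, since the logarithmic singularity in $E$ is exactly what provides an impassable barrier. A secondary technical point is verifying the sign of $g(u)$ for $u\le 0$ and for $u$ near $c$ — these are elementary one-variable calculus checks (compute $g(0)=0$ and the sign of $g'$), analogous to the $f(c)$ computation at the end of the proof of Lemma~\ref{lem:asymptot1}. Care is also needed at the endpoints: finiteness of $E(+\infty)=0$ relies on Remark~\ref{rem:conditions}, and one should note that $E$ being well-defined everywhere is itself a consequence of the bound $u<c$, so the two halves of the argument are intertwined and the $u<c$ bound must be secured first.
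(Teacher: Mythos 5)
Your first half (the bound $u<c$) is essentially the paper's own argument in disguise: your monotone energy $E$ with $E(+\infty)=0$ and the logarithmic barrier at $u=c$ is exactly the integrated identity \eqref{eq:fantasticineq} with the opposite sign convention, so that part is sound. The genuine gap is in the positivity step. Your key claim that $g(u)=-\tfrac{c}{2}u^2-u+c\ln\tfrac{c}{c-u}+\tfrac16u^3$ is strictly positive for $u<0$ is false: one has $g(0)=0$, $g'(0)=0$ and $g''(0)=\tfrac{1-c^2}{c}<0$ for $c>1$, so $u=0$ is a local \emph{maximum} of $g$ and $g(u)<0$ for $u\neq 0$ near $0$ (for instance $c=2$, $u=-0.1$ gives $g\approx-0.008$), and $g(u)\to-\infty$ as $u\to-\infty$. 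Hence the inequality $E(\xi)\le 0$, i.e.\ $\tfrac{\delta c}{2}(u')^2\le -g(u)$, is perfectly compatible with $u(\xi_0)<0$, and the contradiction ``$(u')^2<0$'' you rely on never materializes for strictly negative values of $u$.

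This is precisely why the paper does not argue that the energy inequality alone forbids negative $u$: instead, from the strict positivity of the right-hand side of \eqref{eq:fantasticineq} and the elementary inequality \eqref{eq:logineq}, which gives $u\le c\ln\tfrac{c}{c-u}$ for $u<c$, it deduces $\tfrac{c}{2}u^2-\tfrac16u^3>0$, hence $u(\xi)\neq 0$ for every $\xi$, and then rules out the negative sign by continuity together with $\lim_{\xi\to-\infty}u=u_0>0$ from Lemma~\ref{lem:asymptot1}. Note that your own fallback case contains a correct alternative repair: if $u$ were negative somewhere, then since $u\to u^->0$ as $\xi\to-\infty$ there is, by the intermediate value theorem, a point $\xi_0$ with $u(\xi_0)=0$; there $E(\xi_0)\le 0$ forces $u'(\xi_0)=0$, so $(u,v)(\xi_0)=(0,0)$, and uniqueness for \eqref{eq:firstorder} (the field is locally Lipschitz since $u<c$ has already been secured) would make the solution identically the equilibrium, contradicting $u^->0$. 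If you promote that observation from a side remark to the main positivity argument, the proof closes; as written, however, the sign analysis of $g$ on $u<0$ is wrong and the positivity step does not stand.
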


\begin{proof}
Let $(u, v)$ be a solution to (\ref{eq:firstorder}). Then $u(\xi)$  satisfies (\ref{eq:main}). Multiplying (\ref{eq:main}) by $u'$ and integrating over the interval [$y, \infty$) we obtain
%\begin{equation}
%    \frac{c}{2}u^2(y) - \frac{1}{6}u^3(y) = \varepsilon\int_{y}^{\infty}(u')^2~d\xi + \int_{y}^{\infty}\frac{uu'}{u-c}~d\xi + \frac{\delta c}{2}[u'(y)]^2\ ,
%\end{equation}
%which is equivalent to
\begin{equation}\label{eq:fantasticineq}
    \frac{c}{2}u^2(y) - \frac{1}{6}u^3(y)+u(y)-c\ln\frac{c}{|u(y)-c|} = \varepsilon\int_{y}^{\infty}(u')^2~d\xi + \frac{\delta c}{2}[u'(y)]^2>0\ .
\end{equation}
If $u(\xi)$ is not constant then Lemma \ref{lem:critical} implies that the right hand side of (\ref{eq:fantasticineq}) is positive. Since $\lim_{\xi\to+\infty}u(\xi)=0$, we conclude that $u(\xi)< c$ for all $\xi\in \mathbb{R}$. To see the last assertion, assume that there is a $y_0\in \mathbb{R}$ such that $u(y_0)>c$ and a $y_1\in\mathbb{R}$ such that $u(y_1)<c$ then by continuity of the solution $u$ there will be a $y_2\in(y_0,y_1)$ such that $u(y_2)=c$. Taking the limit $y \to y_2$ in (\ref{eq:fantasticineq}), the left-hand side tends to $-\infty$ which is a contradiction. Thus, any smooth solution $u$ must satisfy $u(y)<c$ for all $y\in\mathbb{R}$.
For such a solution, using the inequality
\begin{equation}\label{eq:logineq}
    \frac{x}{1+x}\leq \ln(1+x)\ , \quad \mbox{$x>0$},
\end{equation}
upon setting $1+x=\frac{c}{|u-c|}$ 
 (\ref{eq:logineq}) we obtain
\begin{equation}\label{eq:inequality2}
    \frac{u^2}{6}(3c-u)=\frac{c}{2}u^2(y) - \frac{1}{6}u^3(y)>0\ .
\end{equation}
We conclude that $u(\xi)$ is either positive or negative. (Inequality (\ref{eq:inequality2}) also yields that $3c-u(y)>0$). However by Lemma 2.1, the solution $u$ cannot be negative because the limit $\lim_{\xi\to-\infty}u(\xi)=u_0>0$. If $u$ were negative it would need to cross the $x$-axis in order to tend to a positive number as $\xi\to-\infty$ thus violating 
\eqref{eq:inequality2}. Therefore, the solution $u(\xi)$ must be positive. 
\end{proof}

Lemma \ref{lem:lem2.3} shows that the set $\mathcal{M}=\{(u,v)\in\mathbb{R}^2: 0<u<c \}$ is an invariant set. Since there are no non-trivial periodic solutions to (\ref{eq:firstorder}) and we have that at infinity both $u'$ and $v'$ tend to zero, we conclude that both the $\alpha$-limit set and the $\omega$-limit set must contain critical points of the system. Since these limit sets are connected by the orbit $\mathcal{R}$, they must each contain exactly one critical point and hence the orbit must tend asymptotically to a critical point both at $+\infty$ and $-\infty$. At one side the orbit connects to $(0,0)$ due to  (\ref{eq:condition2}) while at $-\infty$ the critical point must be different due to Lemma \ref{lem:asymptot1}. Using a refined version Poincaré-Bendixson Theorem, \cite{W2003}, any bounded orbit of (\ref{eq:firstorder}) connects the critical points $(0,0)$ and $(u_0,0)$. 

Changing the direction of the traveling wave using the variables $\tilde{\xi}= -\frac{\xi}{\delta c}$, $\tilde{v}= v$ and $\tilde{u}= u$, we  write the system (\ref{eq:firstordera}) as
\begin{equation}\label{eq:systemb}
\begin{aligned}
\tilde{u}' &= -\tilde{v}\ ,\\
\tilde{v}' &= \frac{dG}{du} (\tilde{u})  - \varepsilon \tilde{v}\ .
\end{aligned}
\end{equation}
where ${}^\prime = \frac{d}{d \tilde{\xi}}$ while $G(u)$ is the potential function
\begin{equation}\label{eq:potentialf}
G(u)=(\delta c) \left[\frac{1}{6}u^3-\frac{c}{2}u^2-u+c\ln\frac{c}{|c-u|} \right] \ .
\end{equation}
Observe that (\ref{eq:systemb}) is dissipative in the sense that its solutions $(\tilde{u}(\tilde{\xi}),\tilde{v}(\tilde{\xi}))$ satisfy $$\frac{d}{d\tilde{\xi}} V(\tilde{u},\tilde{v})=-\varepsilon \tilde{v}^2\leq 0\quad \text{ for } \quad \tilde{\xi} \in \mathbb{R}\ ,$$ 
where $V$ is the Liapunov function
\begin{equation}\label{eq:liapunov}
V(\tilde{u},\tilde{v})=\frac{\tilde{v}^2}{2}+G(\tilde{u})\ .
\end{equation}
The potential is given within a normalization constant which is here selected so that $G$ is negative for the traveling waves under study.

By the LaSalle invariance principle, \cite{W2003}, every bounded orbit $\mathcal{R}$ of (\ref{eq:firstordera}) converges to the equilibrium point $(u_0,0)$ as $\xi\to-\infty$. Moreover, when $\varepsilon=0$ the systems (\ref{eq:firstordera}) and (\ref{eq:systemb}) are Hamiltonian and $V$ coincides with their Hamiltonian. In such case, we know that for $c>1$ there is a unique (up to translations) traveling wave (homoclinic orbit to the origin) with infinite period and of maximal energy $G(\bar{u})=0$, where here $\bar{u}=\max_\xi(u)$ denotes the amplitude of the traveling wave \cite{Chen1998,DM2008}. The particular equation $G(\bar{u})=0$ is known as the speed-amplitude relation for solitary waves ($\varepsilon=0$). Let us denote by $\bar{\eta}=\max_\xi(\eta)$ the amplitude of $\eta$.  It satisfies  the formula $\bar{\eta}=\bar{u}/(c-\bar{u})$. Solving the equation $G(\bar{u})=0$ we obtain the speed-amplidute relationship for the solitary wave
\begin{equation}\label{eq:spamp}
c=\frac{\sqrt{6}(1+\bar{\eta})}{\sqrt{3+2\bar{\eta}}}\cdot \frac{\sqrt{(1+\bar{\eta})\ln (1+\bar{\eta})-\bar{\eta}}}{\bar{\eta}}= 1+\frac{1}{2}\bar{\eta}-\frac{5}{24}\bar{\eta}^2+\frac{79}{720}\bar{\eta}^3+O(\bar{\eta}^4)\ .
\end{equation}
Note that this relationship does not depend neither on $\delta$ nor on $\varepsilon$. In the case of diffusive-dispersive shock waves, their amplitudes should vary between $\eta_0=u_0/(c-u_0)$ and the value $\bar{\eta}$ of the solitary wave amplitude. As we shall see later, even in the case where the dissipation of $O(10^{-2})$, the experimental data confirm that the amplitude of the solitary wave is a very good approximation of the amplitude of undular bores. 

%Writing $G(\bar{u})=g(\bar{u})+c\ln c/(c-\bar{u})$ and expanding $g(\bar{u})$ around $\bar{u}=c$ we obtain the zero order approximation of the amplitude for the limiting case of solitary waves
%\begin{equation}\label{eq:speedampl}
%\bar{u}\approx c\left(1-e^{-1-\frac{1}{3}c^2}\right)\ .
%\end{equation}

$G(u)$ has an inflection point at $(u_c,G(u_c))$ with $u_c=c-\sqrt[3]{c}$. It also has three extrema $(0,0)$ and $(u_{\pm},G(u_{\pm}))$, though we have excluded the case of $u_+$. Moreover, $G(u)$ has a singularity (vertical asymptote) at $u=c$ with $\lim_{u\to c}G(u)= \infty$. A sketch of the potential is presented in Figure \ref{fig:potential1}. The traveling wave will take values in the region between the local maximum of $G$ and the upper bound of possible maximum amplitude of a traveling wave, depicted in the figure by $\bar{u}$. The local maxima of the traveling wave will be located on the right side of the minimum $u_0$ of $G$, while the local minima of $u$ on its left side. As we shall see in Proposition \ref{prop:maxampl} the maximum amplitude of the traveling wave cannot exceed the point of intersection between the line $y=0$ and the potential function $G$. The line $y=0$ depicts also the barrier of the maximal energy a solution may have.

\begin{figure}[ht!]
\includegraphics[width=\textwidth]{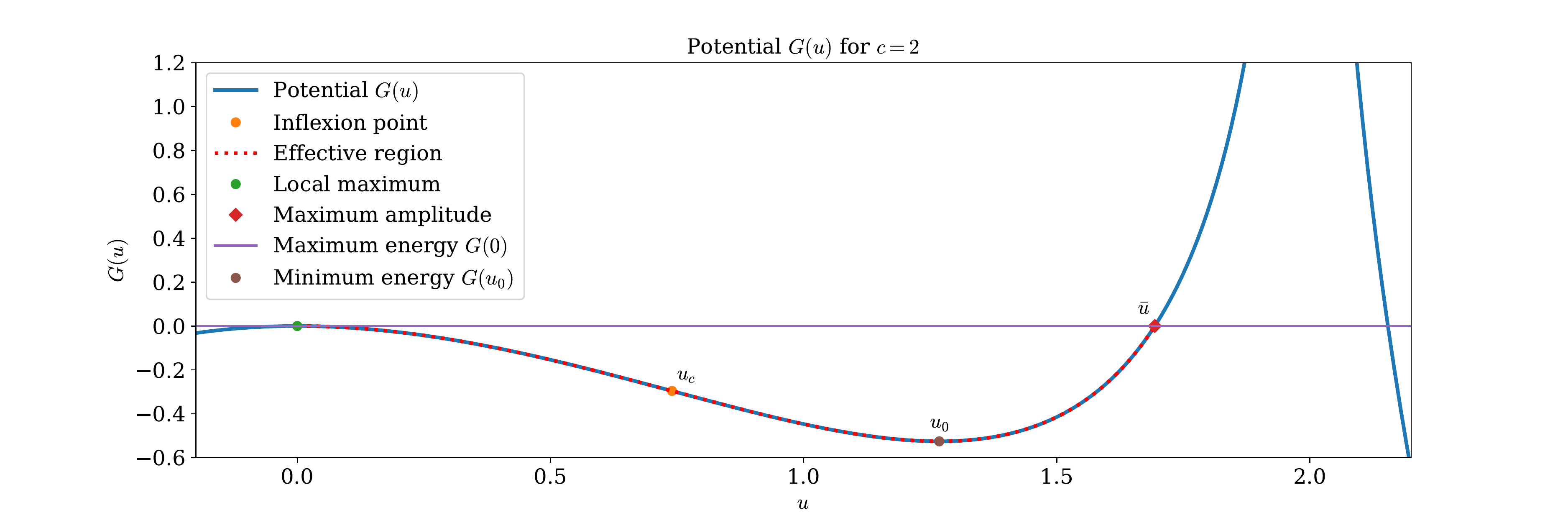}
\caption{The potential function $G(u)$ for $c=2$ and $\delta=1/2$. If $u$ is a traveling wave then $G(u(\xi))$ remains bellow the line $y=0$ }\label{fig:potential1}
\end{figure}

A consequence of the decreasing Liapunov function is the following Proposition on positive solutions.
\begin{proposition}\label{prop:maxampl}
There is $\bar{u}\in (u_0,c)$ such that $0<u(\xi)<\bar{u}<c$ for all $\xi\in\mathbb{R}$. Specifically, for all $\xi\in \mathbb{R}$ it holds that $G(u(\xi))<0$. In addition, if $u(x_0)$ is a local maximum of $u(\xi)$ then $u(x_0)\sim c$ as $c\to\infty$.
\end{proposition}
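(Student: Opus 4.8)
The plan is to combine the a priori bound $G(u(\xi))<0$, which comes from the monotonicity of the Liapunov function $V$ in (\ref{eq:liapunov}) (equivalently, from identity (\ref{eq:fantasticineq})), with a careful description of the potential $G$ of (\ref{eq:potentialf}) on the interval $(0,c)$. First I would pin down the shape of $G$ there. From (\ref{eq:potentialf}), $G(0)=0$ and $\lim_{u\to c^-}G(u)=+\infty$. Differentiating, $G'(u)=(\delta c)\bigl(\tfrac12 u^2-cu-1+\tfrac{c}{c-u}\bigr)$, and multiplying by $c-u$ shows that $G'(u)=0$ on $(0,c)$ exactly when $u=0$ or $u^2-3cu+2(c^2-1)=0$, i.e.\ at $u=u_0$; the other root $u_+$ lies outside $(0,c)$ by (\ref{eq:critpo}) and the discussion following it. Differentiating once more, $G''(u)=(\delta c)\bigl(u-c+\tfrac{c}{(c-u)^2}\bigr)$, so $G''(0)=(\delta c)\tfrac{1-c^2}{c}<0$ and $G''(u_0)=(\delta c)\,\alpha(c)>0$ for $c>1$, using the positivity of $\alpha$ on $(1,\infty)$ established above. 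Hence $0$ is a local maximum of $G$ with value $0$, $u_0$ is the unique interior critical point and is a local minimum, and therefore $G$ is strictly decreasing on $(0,u_0)$ and strictly increasing on $(u_0,c)$, with $G(u_0)<G(0)=0$ and $G\to+\infty$ at $c^-$. By the intermediate value theorem there is a unique $\bar u\in(u_0,c)$ with $G(\bar u)=0$, and $G(u)<0$ for $u\in(0,\bar u)$ while $G(u)>0$ for $u\in(\bar u,c)$; note $\bar u$ depends on $c$ alone, not on $\delta$ or $\varepsilon$, in agreement with the solitary-wave amplitude of (\ref{eq:spamp}).

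Next I would feed in the global bound on the orbit. By Lemma \ref{lem:lem2.3} we have $0<u(\xi)<c$ for all $\xi$, so $|u(\xi)-c|=c-u(\xi)$, and multiplying identity (\ref{eq:fantasticineq}) by $-\delta c$ gives
\begin{equation*}
G(u(y))=-\delta c\Bigl(\varepsilon\int_y^\infty (u')^2\,d\xi+\tfrac{\delta c}{2}[u'(y)]^2\Bigr)\qquad\text{for all }y\in\mathbb{R}.
\end{equation*}
Since $u$ is non-constant, the right-hand side is strictly negative for every $y$ (exactly as in the proof of Lemma \ref{lem:lem2.3}); equivalently, $V$ is strictly negative along the orbit, so $G(u(\xi))\le V<0$. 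Combining with the first step, $G(u(\xi))<0$ together with $u(\xi)\in(0,c)$ force $u(\xi)\in(0,\bar u)$, i.e.\ $0<u(\xi)<\bar u<c$ for all $\xi$, which is the first assertion.

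Finally, for the asymptotic statement, let $u(x_0)$ be a local maximum. Then Lemma \ref{lem:critical} gives $u(x_0)>u_0$, while the bound just proved gives $u(x_0)<\bar u<c$; hence $u_0<u(x_0)<c$. Since $c-1<u_0$ by Remark \ref{rem:boundcrit} (equivalently $u_0/c=\tfrac12\bigl(3-\sqrt{1+8/c^2}\,\bigr)\to1$ as $c\to\infty$), the squeeze $1-\tfrac1c<u(x_0)/c<1$ yields $u(x_0)/c\to1$, i.e.\ $u(x_0)\sim c$ as $c\to\infty$.

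The step that requires genuine care is the shape analysis in the first paragraph — specifically, confirming that $u_0$ is the only critical point of $G$ inside $(0,c)$ and that it is a local minimum, since this is precisely what identifies $\{\,u\in(0,c):G(u)<0\,\}$ with the interval $(0,\bar u)$. Once that is in place, the conclusion follows immediately from (\ref{eq:fantasticineq}) / the Liapunov monotonicity together with Lemmas \ref{lem:asymptot1}--\ref{lem:lem2.3} and Remark \ref{rem:boundcrit}.
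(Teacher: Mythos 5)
Your proposal is correct and follows essentially the same route as the paper: the Liapunov/energy identity (your rescaled form of \eqref{eq:fantasticineq} is exactly the monotonicity of $V$ in \eqref{eq:liapunov}) gives $G(u(\xi))<0$, the shape of $G$ on $(0,c)$ identifies $\{G<0\}$ with $(0,\bar u)$, and the squeeze $u_0/c<u(x_0)/c<\bar u/c<1$ gives $u(x_0)\sim c$. The only difference is that you verify in detail the properties of $G$ (unique interior critical point at $u_0$, local minimum there) that the paper merely asserts in the discussion preceding the proposition, which is a welcome completion rather than a different argument.
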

\begin{proof}
Let $\tilde{\xi}\in \mathbb{R}$, then $|\tilde{u}(\tilde{\xi})|+|\tilde{v}(\tilde{\xi})|\to 0$ as $\tilde{\xi}\to -\infty$ (since we have changed its direction). Because $V(\tilde{\xi})$ is decreasing, then for $\tilde{\xi}>-\infty$ we have  $V(\tilde{u}(\tilde{\xi}),0)<V(0,0)$.  This yields,
$$G(\tilde{u}(\tilde{\xi}))< G(0)=0,\quad \text{ for all }\ \xi\in\mathbb{R}\ .$$
Thus, if $\tilde{u}(\tilde{x}_0)$ is any local maximum, this must satisfy $G(\tilde{u}(\tilde{x}_0))<0$. If we denote $\bar{u}>0$ the value for which $G(\bar{u})=0$, then for $x_0=-(\delta c) \tilde{x}_0$ we have 
\begin{equation}\label{eq:maximampl}
u_0<u(x_0)<\bar{u}<c\ .
\end{equation}
Furthermore, since 
$$\lim_{c\to\infty}\frac{u_0}{c}=\lim_{c\to\infty}\frac{3c-\sqrt{c^2+8}}{2c}=1\ ,$$
then by (\ref{eq:maximampl}) we have
$$\frac{u_0}{c}<\frac{u(x_0)}{c}<\frac{\bar{u}}{c}<1\ .$$
Thus, for large values of $c$, $u(x_0)\sim c$, and at the same time $G(u(x_0))<0$. This means that the solution will be concentrated close to $\bar{u}\sim c$ for large values of $c$, and so fast that the solution tends to be weakly singular in the limit $c\to\infty$. The difference between the profiles of traveling waves of different speeds are presented in Figure \ref{fig:dsw1}\end{proof}

Because $(0,0)$ is a saddle point, there are two semi-orbits that converge to $(0,0)$ as $\xi\to+\infty$ and approach the origin tangentially to the stable manifold determined by the slope
$$\lim_{\xi\to+\infty}\frac{v'(\xi)}{u'(\xi)}=\lim_{\xi\to+\infty}\frac{v(\xi)}{u(\xi)}=\delta c \lambda_{-}\ ,$$
where $\lambda_{\pm}$ as defined in (\ref{eq:eigen1}). Thus, one semi-orbit approaches the origin from the fourth quadrant $Q_4=\{(u,v):u>0, v<0\}$ and the other through the second quadrant $Q_2=\{(u,v):u<0, v>0\}$, \cite{W2003}. The later case of negative $u$ is excluded though due to Lemma \ref{lem:lem2.3} and thus the only semi-orbit that can exist is the one with $u>0$.

We have seen that if equation (\ref{eq:main}), or the equivalent system (\ref{eq:firstorder}), has a solution $u$ satisfying the asymptotic conditions (\ref{eq:condition1}), then $0 < u < c$ ensuring that $\eta > 0$, see Lemma \ref{lem:lem2.3}  and \eqref{eq:relation1}.
We next prove that the system (\ref{eq:firstorder}) has at least one such solution $u$. The main result of this section is stated bellow.

\begin{theorem}\label{thrm:existence}
Let $\delta>0$, $\varepsilon>0$ and $c>1$ be given constants. Then there is a unique, global solution $(u,v)$ of the system (\ref{eq:firstorder}), which defines in the phase space a bounded orbit $\mathcal{R}\subset \mathcal{M}$ that tends to $(u_0,0)$ as $\xi\to-\infty$, and to $(0,0)$ as $\xi\to+\infty$. 
\end{theorem}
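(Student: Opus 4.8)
The plan is to realise the desired heteroclinic orbit as the branch of the stable manifold of the saddle $(0,0)$ that enters $\{u>0\}$, to trap it inside $\mathcal{M}$ by means of the Liapunov function $V$ of \eqref{eq:liapunov}, and then to deduce uniqueness from the fact that the stable manifold of a planar saddle is one-dimensional. Concretely, I would first invoke the stable manifold theorem at the hyperbolic equilibrium $(0,0)$ (a saddle by \eqref{eq:eigen1}): it supplies a $C^\infty$ invariant curve $W^s$ through the origin, tangent there to the $\lambda_-$-eigenvector, and---as already computed before the theorem---$W^s$ has two branches, one lying near the origin in $Q_4=\{(u,v):u>0,\ v<0\}$ and one in $Q_2=\{(u,v):u<0,\ v>0\}$. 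I would take $\mathcal{R}$ to be the orbit forming the $Q_4$ branch, parametrised so that $\bu(\xi)=(u(\xi),v(\xi))\to(0,0)$ as $\xi\to+\infty$; by Remark~\ref{rem:conditions} all higher derivatives of $u$ and $v$ vanish at $+\infty$ as well, so \eqref{eq:condition1} at $+\infty$ and \eqref{eq:condition2} hold along $\mathcal{R}$.

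The central step is to show $\mathcal{R}$ stays in $\mathcal{M}$ and is bounded. Along solutions of \eqref{eq:firstorder} one has $\tfrac{d}{d\xi}V(u,v)=\tfrac{\varepsilon}{\delta c}v^2\ge0$, so $V$ is non-decreasing in $\xi$; since $V(\bu(\xi))\to V(0,0)=G(0)=0$ as $\xi\to+\infty$ and $v\not\equiv0$ on $\mathcal{R}$ (near the origin $v<0$ on the $Q_4$ branch), this yields $V(\bu(\xi))<0$, hence $G(u(\xi))<0$, for every finite $\xi$. I would then use the shape of the potential \eqref{eq:potentialf} on $(-\infty,c)$---namely $G(0)=0$, $G''(0)=(\delta c)\tfrac{1-c^2}{c}<0$, the only interior critical point of $G$ on $(0,c)$ is its minimum at $u_0$, $G(u)\to-\infty$ as $u\to-\infty$, $G(u)\to+\infty$ as $u\to c^-$, and $\bar u\in(u_0,c)$ is its unique positive root (Proposition~\ref{prop:maxampl})---to conclude that $\{u<c:\,G(u)<0\}=(-\infty,\bar u)\setminus\{0\}$. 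Since $u>0$ near $+\infty$ and $u$ can neither meet $0$ (where $G=0$) nor reach $c$ (where $G\to+\infty$), it follows that $0<u(\xi)<\bar u<c$ for all $\xi$; then $v^2=2\bigl(V-G(u)\bigr)<-2G(u)\le 2\max_{[0,\bar u]}|G|$, so $\mathcal{R}$ lies in a compact subset of $\mathcal{M}$ on which $\bF$ is smooth, and the solution is therefore global.

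At this point $\mathcal{R}$ is a bounded orbit contained in $\mathcal{M}$, so the Poincar\'e--Bendixson/LaSalle analysis already carried out applies: there are no non-constant periodic orbits, the $\omega$- and $\alpha$-limit sets are single equilibria, $(u_+,0)$ is excluded because $u_+>c$, and Lemma~\ref{lem:asymptot1} rules out the origin as the limit at $-\infty$; hence $\bu(\xi)\to(u_0,0)$ as $\xi\to-\infty$, which completes the existence part. For uniqueness, let $\mathcal{R}'$ be any bounded orbit in $\mathcal{M}$ with the stated limits; since it converges to the saddle $(0,0)$ as $\xi\to+\infty$ it must lie on $W^s$, and since $\mathcal{R}'\subset\mathcal{M}\subset\{u>0\}$ it is the $Q_4$ branch (the $Q_2$ branch, with $u<0$, being excluded by Lemma~\ref{lem:lem2.3})---that is, $\mathcal{R}'=\mathcal{R}$ as a subset of the phase plane, so the orbit, and hence the solution up to translation in $\xi$, is unique. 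The sign of $\varepsilon^2-4\delta c\,\alpha(c)$ in \eqref{eq:eigen2} plays no role here, affecting only the monotone-versus-oscillatory shape of the tail studied in the next section.

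I expect the confinement step to be the main obstacle, and it is precisely the feature absent from the KdV--Burgers treatment of \cite{BS1985}: one must keep the orbit away from the singularity $u=c$ of $\bF$ (and of $G$), which forces the combination of the sign bound $V\le0$ with the blow-up $G(u)\to+\infty$ as $u\to c^-$. Once $u$ is trapped in $[0,\bar u]$ with $\bar u<c$, the boundedness of $v$, global existence, and the identification of the backward limit are routine consequences of the lemmas already established. A secondary technical point, needed only for uniqueness, is that a bounded $\mathcal{M}$-orbit really must approach $(0,0)$ through $Q_4$ rather than through $Q_2$---this is exactly what Lemma~\ref{lem:lem2.3} secures.
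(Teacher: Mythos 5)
Your proposal is correct, and its skeleton coincides with the paper's: take the branch of the stable manifold of the saddle $(0,0)$ entering $Q_4$, show it stays trapped in $\mathcal{M}$ and is bounded, then invoke the Bendixson--Dulac/Poincar\'e--Bendixson/LaSalle analysis to identify $(u_0,0)$ as the $\alpha$-limit, with uniqueness coming from the one-dimensionality of $W^s$ and the exclusion of the $Q_2$ branch. Where you genuinely diverge is in the confinement/boundedness step, which is the bulk of the paper's proof. The paper first gets $0<u<c$ from Lemma~\ref{lem:lem2.3} (the energy identity \eqref{eq:fantasticineq}, which is your inequality $V\le 0$ written in integral form) and then proves boundedness of $v$ by two separate barrier arguments: a contradiction at a putative last crossing of $v=-\mu$ using the maximum $f(u_c)=(2-3\sqrt[3]{c^2}+c^2)/2$ of $f(u)=-G'(u)/(\delta c)$ at $u_c=c-\sqrt[3]{c}$, and a symmetric argument for an upper bound $v<C$. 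You instead read everything off the monotone Liapunov function: $\frac{d}{d\xi}V=\frac{\varepsilon}{\delta c}v^2\ge 0$ together with $V\to 0$ at $+\infty$ gives $V<0$, hence $G(u)<0$, hence $0<u<\bar u<c$ by the shape of $G$ (this is essentially Proposition~\ref{prop:maxampl}), and then $v^2=2(V-G(u))<-2G(u)\le 2\max_{[0,\bar u]}|G|$ bounds $v$ in one line. Your route is shorter and avoids the explicit $\varepsilon$-dependent barriers; the paper's explicit bounds have the side benefit of quantitative estimates on $v$ (e.g.\ $v\ge -\delta c(2-3\sqrt[3]{c^2}+c^2)/\varepsilon$), which are reused later in the proof of Theorem~\ref{thrm:regshock} to rule out the saddle case $u(\xi)=c-\sqrt[3]{c}$, so if you adopt your version you would need to recover that bound separately there. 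Two small points to make explicit if you write this up: the strictness $V(\bu(\xi))<0$ should be argued as you indicate (otherwise $v\equiv 0$ on a half-line forces the orbit to be an equilibrium), and ruling out the origin as $\alpha$-limit is cleanest not by citing Lemma~\ref{lem:asymptot1} (whose proof presupposes the limit exists) but by noting that $V$ along the orbit is bounded above by a strictly negative number as $\xi\to-\infty$, whereas $V(0,0)=0$; either way the conclusion stands.
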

\begin{proof}
Let $(u(\xi),v(\xi))$ be a solution of (\ref{eq:firstorder}) corresponding to one of the two semi-orbits lying on the stable manifold of the saddle $(0,0)$, and assume that it is defined for at least large values of $\xi$ for the particular values of $c$. Such a solution exists since the function $\bF(\bu)$ of (\ref{eq:firstorder}) is locally Lipschitz. Specifically, for $v\in \mathbb{R}$ and $u\in I\subset (0,c)$ we have that $\bF$ is continuously differentiable with
$$
\big(\nabla \bF (\bu^\ast) \big)\bu=\begin{pmatrix}(\delta c)^{-1}v, & cu -\frac{c  u}{(u^\ast -c)^2} - u^\ast u+\varepsilon (\delta c)^{-1} v
\end{pmatrix}^T\ . $$
Thus, from the standard theory of differential equations we deduce that there is a unique (subject to $\xi$-translations) solution to (\ref{eq:firstorder}) for as long as it is remained bounded. 
Therefore, the semi-orbit that approaches the origin exists for at least large values of $\xi$. By the Remark \ref{rem:conditions} such orbit satisfies the boundary condition (\ref{eq:condition2}). By Lemma \ref{lem:lem2.3} we also have that $u$ will be always positive and bounded by $c$. In addition to $u$, if $v$ can be shown to be bounded for all values of $\xi$, then the global existence of the traveling waves will be established.

We first show that 
\begin{equation}\label{eq:loboundv}
v(\xi)\geq -\frac{\delta c}{\varepsilon}(2-3\sqrt[3]{c^2}+c^2)\geq -\frac{\delta c^3}{\varepsilon}\ .
\end{equation}
For contradiction, we assume that for some $\xi\in\mathbb{R}$ it holds
$v(\xi)<-(\delta c)(2-3\sqrt[3]{c^2}+c^2)/\varepsilon$. Let $\xi_0$ be the maximal $\xi$ such that $v(\xi_0)=-\mu$ with $-\mu<-(\delta c)(2-3\sqrt[3]{c^2}+c^2)/\varepsilon<0$. Because $v(\xi)\to 0>-\mu$ as $\xi\to+\infty$, we have that $v(\xi)>v(\xi_0)$ for $\xi>\xi_0$, and specifically $v'(\xi_0)>0$ since $\xi_0$ was assumed to be maximal. On the other hand, from the second equation of (\ref{eq:firstordera}) and using $0<u(\xi_0)<c$, we have that
$$
\begin{aligned}
v'(\xi_0) &= cu(\xi_0)+\frac{u(\xi_0)}{u(\xi_0)-c}-\frac{1}{2}u^2(\xi_0)+\varepsilon(\delta c)^{-1} v(\xi_0)\\
&=cu(\xi_0)+\frac{u(\xi_0)}{u(\xi_0)-c}-\frac{1}{2}u^2(\xi_0)-\varepsilon(\delta c)^{-1} \mu \\
&=f(u(\xi_0))-\varepsilon(\delta c)^{-1} \mu 
\end{aligned}
$$
where $f(u)=cu+\frac{u}{u-c}-\frac{1}{2}u^2$. Note that $f(u)=-G'(u)/(\delta c)$. It is easilly seen that $f'(u)=-\frac{(u-c)^3+c}{(u-c)^2}$, and thus $f(u)$ has a global maximum at $u_c=c-\sqrt[3]{c}$ with maximum value 
$$f(u_c)=(2-3\sqrt[3]{c^2}+c^2)/2>0\ .$$ Thus, for $c>1$ we have
$$\begin{aligned}
v'(\xi_0)&\leq f(u_c)-\varepsilon(\delta c)^{-1} \mu\\
&<(2-3\sqrt[3]{c^2}+c^2)/2-(2-3\sqrt[3]{c^2}+c^2)\\
&=-(2-3\sqrt[3]{c^2}+c^2)/2\\
&<0
\end{aligned}
$$
which is a contradiction.

Similarly, we show that 
$$v(\xi)<C:=\frac{c/(c-\bar{u})+c^2/2}{\varepsilon(\delta c)^{-1}}\ ,$$
where $0<\bar{u}<c$ is the actual upper bound of $u$ with  $u\leq \bar{u}<c$ due to the Proposition \ref{prop:maxampl}. To see this assume that $\xi_1$ is the largest value of $\xi$ such that $v(\xi)=C>0$. Since $v(\xi)\to 0$ as $\xi\to+\infty$ yields that $v'(\xi_1)<0$. But from the second equation of (\ref{eq:firstordera}) we have that
$$
\begin{aligned}
v'(\xi_1) &= cu(\xi_1)+\frac{u(\xi_1)}{u(\xi_1)-c}-\frac{1}{2}u^2(\xi_1)+\varepsilon(\delta c)^{-1} v(\xi_1)\\
&=cu(\xi_1)+\frac{u(\xi_1)}{u(\xi_1)-c}-\frac{1}{2}u^2(\xi_1)+\varepsilon(\delta c)^{-1} C\\
 &>-\frac{c}{c-\bar{u}}-\frac{1}{2} c^2 +\varepsilon(\delta c)^{-1} C\\
& =0\ .
\end{aligned}
$$
This is again a contradiction, and thus $v$ is always bounded as well as $u$, which completes the existence proof.
\end{proof}

As a consequence of Theorem \ref{thrm:existence} equation (\ref{eq:main}) has a unique (up to $\xi$-translations) solution that satisfies the asymptotic conditions (\ref{eq:condition1}) and (\ref{eq:condition2}). 

If we assume purely dissipative solutions ($\delta=0$) then the situation is identical to the dissipative Burger's equation where regularized shock waves are formed \cite{L2013}. On the other hand, when we consider the purely dispersive case ($\varepsilon=0$), then only asymptotically we describe undular bore solutions as dispersive shock waves \cite{EH2016}. In the next section we study the nature of solutions in detail.

\section{Dispersive and dissipative dominated solutions}\label{sec:shape}

Examining the relation (\ref{eq:relation3}) we observe that the stability of the critical point at $-\infty$ changes nature depending on the strength on the dispersion compared to the dissipation. First, recall that 
$$\alpha(c)=\frac{c - \sqrt{ c^2+8}}{2}+\frac{4 c}{(c - \sqrt{ c^2+8})^2}=u_0-c+\frac{c}{(u_0-c)^2}\ ,
$$
and the critical point $(u_0, 0)$ is an unstable spiral when 
$\varepsilon^2<4\delta c\alpha(c)$, while it is an unstable node when $\varepsilon^2\geq 4\delta c\alpha(c)$. The change in type of the critical point signals change in the nature of the traveling wave as well.  In the region  $\varepsilon^2 \ge 4\delta c\alpha(c)$ where dissipation dominates the dispersion,
the solution is not oscillatory but strictly decreasing and resembles a regularized shock wave.

\begin{theorem}\label{thrm:regshock}
Let $u(\xi)$ be the unique solution to (\ref{eq:main}) for values $\varepsilon>0$, $\delta>0$ and $c>1$ such that $\varepsilon^2\geq4 \delta c\alpha(c)$. Then for all $\xi\in \mathbb{R}$, $0<u(\xi)< u_0$ and $u'(\xi)<0$. Moreover, $u$ has a unique inflection point $\xi_1$ such that $(\xi-\xi_1)u''(\xi_1)>0$, for all $\xi\not=\xi_1$.
\end{theorem}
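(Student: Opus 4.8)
The plan is to split the statement into the monotonicity assertion ($0<u<u_0$ and $u'<0$) and the inflection assertion, and to carry out both on the time–reversed system \eqref{eq:systemb} in the variable $\tilde\xi=-\xi/(\delta c)$, along the orbit $\mathcal R$ which, by Theorem~\ref{thrm:existence}, is the unique solution joining $(0,0)$ (as $\tilde\xi\to-\infty$) to $(u_0,0)$ (as $\tilde\xi\to+\infty$), stays in $\{0<\tilde u<c\}$ by Lemma~\ref{lem:lem2.3}, and lies in the quadrant $\{\tilde u>0,\ \tilde v<0\}$ for $\tilde\xi$ near $-\infty$ (this is the statement, recorded just before Theorem~\ref{thrm:existence}, that near $\xi=+\infty$ the orbit is in $Q_4$). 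The hypothesis $\varepsilon^2\ge 4\delta c\,\alpha(c)$ is used exactly to make the eigenvalues $\Lambda_\pm=\bigl(\varepsilon\pm\sqrt{\varepsilon^2-4\delta c\alpha(c)}\,\bigr)/(2\delta c)$ of \eqref{eq:eigen2} real; note $0<\Lambda_-\le\Lambda_+$, $\Lambda_++\Lambda_-=\varepsilon/(\delta c)$, $\Lambda_+\Lambda_-=\alpha(c)/(\delta c)$.

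For the monotonicity I would use the auxiliary quantity
\[
w(\tilde\xi)=\tilde v(\tilde\xi)-\delta c\,\Lambda_+\bigl(\tilde u(\tilde\xi)-u_0\bigr),
\]
the signed distance of the orbit from the faster eigenline at $(u_0,0)$. The structural input is that $f(u)=cu+\tfrac{u}{u-c}-\tfrac12u^2$ is strictly concave on $(-\infty,c)$, since $f''(u)=-1+2c/(u-c)^3<0$ there, and that its tangent at the root $u=u_0$ is $u\mapsto -\alpha(c)(u-u_0)$, because $f(u_0)=0$ and $f'(u_0)=-\alpha(c)$ (recall $-f'(u)=(u-c)+c/(u-c)^2$). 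Hence
\[
h(u):=f(u)+\alpha(c)(u-u_0)\le 0\quad\text{for all }u<c,\qquad h(u)=0\iff u=u_0 .
\]
Using \eqref{eq:systemb} together with the relations for $\Lambda_\pm$ one computes $w'=-\delta c\,h(\tilde u)-\delta c\,\Lambda_-\,w$. Since $h(\tilde u(\tilde\xi))\le 0$ always, on $\{w=0\}$ one has $w'=-\delta c\,h(\tilde u)\ge 0$, so $\{w\ge 0\}$ is positively invariant, and a zero of $w$ at a finite $\tilde\xi$ would force $w<0$ immediately before unless $h(\tilde u)=0$ there, i.e.\ unless the orbit sits at the equilibrium $(u_0,0)$, which is excluded. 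As $w(\tilde\xi)\to\delta c\,\Lambda_+u_0>0$ when $\tilde\xi\to-\infty$, it follows that $w(\tilde\xi)>0$ for every $\tilde\xi$. Now suppose $\tilde v$ were not everywhere negative; since $\tilde v<0$ near $-\infty$ there would be a first $\tilde\xi_0$ with $\tilde v(\tilde\xi_0)=0$ and $\tilde v<0$ on $(-\infty,\tilde\xi_0)$. On that interval $\tilde u'=-\tilde v>0$, so $\tilde u$ increases to $a:=\tilde u(\tilde\xi_0)>0$ with $\tilde u'(\tilde\xi_0)=0$; a function increasing up to a critical point has $\tilde u''(\tilde\xi_0)\le 0$, and $\tilde u''(\tilde\xi_0)=-\tilde v'(\tilde\xi_0)=-G'(a)$, whence $G'(a)\ge 0$, hence $a\ge u_0$ and in fact $a>u_0$ (again $(u_0,0)$ cannot be reached in finite ``time''). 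But then $w(\tilde\xi_0)=-\delta c\,\Lambda_+(a-u_0)<0$, a contradiction. Therefore $\tilde v<0$ throughout, i.e.\ $u'(\xi)<0$ for all $\xi$, and since $u$ runs monotonically from $u_0$ down to $0$ we get $0<u(\xi)<u_0$.

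For the inflection point, rewrite \eqref{eq:main} as $\delta c\,u''=f(u)+\varepsilon u'$; since $u'=v/(\delta c)$ this is $u''=\tfrac{\varepsilon}{(\delta c)^2}\,\chi$ with $\chi:=v+\tfrac{\delta c}{\varepsilon}f(u)$, so the inflection points are exactly the zeros of $\chi$. As $u$ is now a strictly decreasing smooth diffeomorphism of $\mathbb R$ onto $(0,u_0)$, $\chi$ may be regarded as a function of $u\in(0,u_0)$, with $\chi(0^+)=\chi(u_0^-)=0$ because $v\to0$ and $f(0)=f(u_0)=0$. Comparing at each endpoint the slope of the orbit with that of the curve $v=-\tfrac{\delta c}{\varepsilon}f(u)$ — the orbit leaves $(u_0,0)$ with slope $\delta c\Lambda$ for some $\Lambda\in\{\Lambda_-,\Lambda_+\}$ and reaches $(0,0)$ with slope $\delta c\lambda_-$ (with $\lambda_-$ as in \eqref{eq:eigen1}), while $v=-\tfrac{\delta c}{\varepsilon}f(u)$ has slopes $\delta c\alpha(c)/\varepsilon$ and $-\delta(c^2-1)/\varepsilon$ there; under $\varepsilon^2\ge 4\delta c\alpha(c)$ one checks $\delta c\Lambda_-> \delta c\alpha(c)/\varepsilon$ and $\delta c\lambda_->-\delta(c^2-1)/\varepsilon$, each of which reduces after rationalising to the positivity of $4\delta^2c^2\alpha(c)^2$, resp.\ $4\delta^2(c^2-1)^2$ — shows $\chi<0$ for $u$ near $u_0$ and $\chi>0$ for $u$ near $0$. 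At an interior zero $u_\ast$ of $\chi$ one has $dv/du=0$, so $\tfrac{d\chi}{du}(u_\ast)=\tfrac{\delta c}{\varepsilon}f'(u_\ast)$, whose sign is positive for $u_\ast<u_c$ and negative for $u_\ast>u_c$, where $u_c=c-\sqrt[3]{c}\in(0,u_0)$ is the unique maximum of $f$ on $(0,u_0)$. Since $\chi$ is positive near $0$ and negative near $u_0$, the zero of smallest $u$ must be a downward crossing, hence lies in $(u_c,u_0)$; after it $\chi<0$, so any further zero would have to be an upward crossing, which is impossible at the larger values of $u$ involved. Thus $\chi$ has exactly one zero $u_\ast\in(u_c,u_0)$, the degenerate possibility $u_\ast=u_c$ being excluded because one then computes $\chi''(u_c)<0$, incompatible with $\chi>0$ near $u=0$. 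Letting $\xi_1$ be the point with $u(\xi_1)=u_\ast$, monotonicity of $u$ gives $\chi>0$, i.e.\ $u''>0$, for $\xi>\xi_1$ and $\chi<0$, i.e.\ $u''<0$, for $\xi<\xi_1$, which is the assertion $(\xi-\xi_1)u''(\xi)>0$ for $\xi\neq\xi_1$.

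The main obstacle is the monotonicity step: the Liapunov function $V$ by itself does not forbid the orbit from overshooting $u_0$, and the node hypothesis $\varepsilon^2\ge 4\delta c\alpha(c)$ must be exploited quantitatively. The function $w$ and the identity $w'=-\delta c\,h(\tilde u)-\delta c\Lambda_-w$ — which rely both on the strict concavity of $f$ (through $h\le 0$) and on $\Lambda_\pm$ being real and positive — are precisely what turns ``$(u_0,0)$ is a node'' into an invariant half–plane. The inflection step is then a comparatively routine sign analysis, the only delicate point being the exclusion of a degenerate inflection at $u_c$, which the crossing argument disposes of on its own.
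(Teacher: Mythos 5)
Your proposal is correct, but it takes a genuinely different route from the paper in both halves. For the monotonicity, the paper works in the original variable $\xi$ and traps the orbit inside the triangle bounded by $v=0$, $u=0$ and the \emph{slow} eigenline $v=\delta c\,\Lambda_-(u-u_0)$, ruling out a first crossing of that line by a slope comparison that rests on $m^2-\varepsilon m+\delta c\,\alpha(c)=0$ (with $m=\delta c\Lambda_-$) together with the elementary inequality $u(u_+-u)<2\alpha(c)(c-u)$ for $u<u_0$; you instead pass to the reversed system \eqref{eq:systemb}, measure the orbit against the \emph{fast} eigenline through the functional $w=\tilde v-\delta c\Lambda_+(\tilde u-u_0)$, and use the identity $w'=-\delta c\,h(\tilde u)-\delta c\Lambda_- w$ with $h\le 0$ coming from strict concavity of $f$ (tangent-line inequality at $u_0$), which I checked; a Gronwall-type invariance plus your first-zero argument for $\tilde v$ then gives $v<0$, and this cleanly isolates where the hypothesis $\varepsilon^2\ge 4\delta c\alpha(c)$ enters (reality and positivity of $\Lambda_\pm$). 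Your half-plane $w>0$ is a weaker confinement than the paper's triangle ($v\ge\delta c\Lambda_-(u-u_0)$), but it suffices once combined with the first-zero step. For the inflection point, the paper stays in $\xi$, classifies critical points of $v$ through the sign of $v''$ and excludes the degenerate case at $u_c=c-\sqrt[3]{c}$ using the a priori bound $v\ge-\delta c(2-3c^{2/3}+c^2)/\varepsilon$ from the existence proof; you instead exploit the already-established monotonicity to parametrize by $u$ and count sign changes of $\chi=v+\tfrac{\delta c}{\varepsilon}f(u)$ against the curve $v=-\tfrac{\delta c}{\varepsilon}f(u)$, fixing the endpoint signs by comparing the orbit's tangent slopes at the two equilibria ($\delta c\lambda_-$ at the saddle, $\delta c\Lambda_\pm$ at the node) with the slopes of that curve, and killing the degenerate tangency at $u_c$ by the computation $\chi''(u_c)=\tfrac{\delta c}{\varepsilon}f''(u_c)<0$ (your $V''(u_c)=0$ step is right). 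The only ingredient you use without proof — that an orbit entering a node with real eigenvalues does so tangentially to an eigendirection — is standard and should be cited (Hartman, as the paper does for the spiral case); note the borderline case $\varepsilon^2=4\delta c\alpha(c)$ is covered since then $\Lambda_-=\Lambda_+$. In exchange, your argument avoids both the paper's triangle computation and its quantitative lower bound on $v$, while the paper's proof never needs the node tangency and yields the sharper phase-plane localization of the orbit.
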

\begin{proof}
Let $(u,v)$ be the corresponding solution to the system (\ref{eq:firstordera}). Since $v=(\delta c)u'$, proving that $u'(\xi)<0$ for all $\xi$ it suffices to prove that for the solution $(u,v)$ we have $u(\xi)>0$ and $v(\xi)<0$ for all $\xi\in \mathbb{R}$, which means that the heteroclinic orbit guaranteed by Theorem \ref{thrm:existence} remains always in the fourth quadrant. To this end, we have already seen that the solution cannot intersect the $v$-axis since $0<u(\xi)<c$ for all $\xi\in\mathbb{R}$. It remains to show that the orbit cannot exit the semi-infinite strip $\mathcal{S}=\{(u,v):v<0,0<u<u_0\}$, which can be established if we show that the orbit is enclosed by a bounded domain of the strip $\mathcal{S}$. In particular, we will show that the orbit remains inside the triangle $\mathcal{T}$ defined by the lines 
$$\ell_0=\{(u,v):v=0, 0\leq u\leq u_0\},~ \ell_1=\left\{(u,v): v= m(u-u_0),0\leq u\leq u_0\right\}\ ,$$
and $$\ell_2=\left\{(u,v):- m u_0 \leq v\leq 0, u=0\right\}\ ,$$
with 
$$m=(\delta c) \Lambda_{-}>0\ .$$ 
Figure \ref{fig:triangle} presents the graph of a heteroclinic orbit $R$ in the dissipative case $\varepsilon^2\geq 4 \delta c\alpha(c)$ enclosed by a triangle $\mathcal{T}$ defined by the edges $\ell_0$, $\ell_1$ and $\ell_2$. The limit of the traveling wave as $\xi\to-\infty$ is depicted by an asterisk.
\begin{figure}[ht!]
\includegraphics[width=\textwidth]{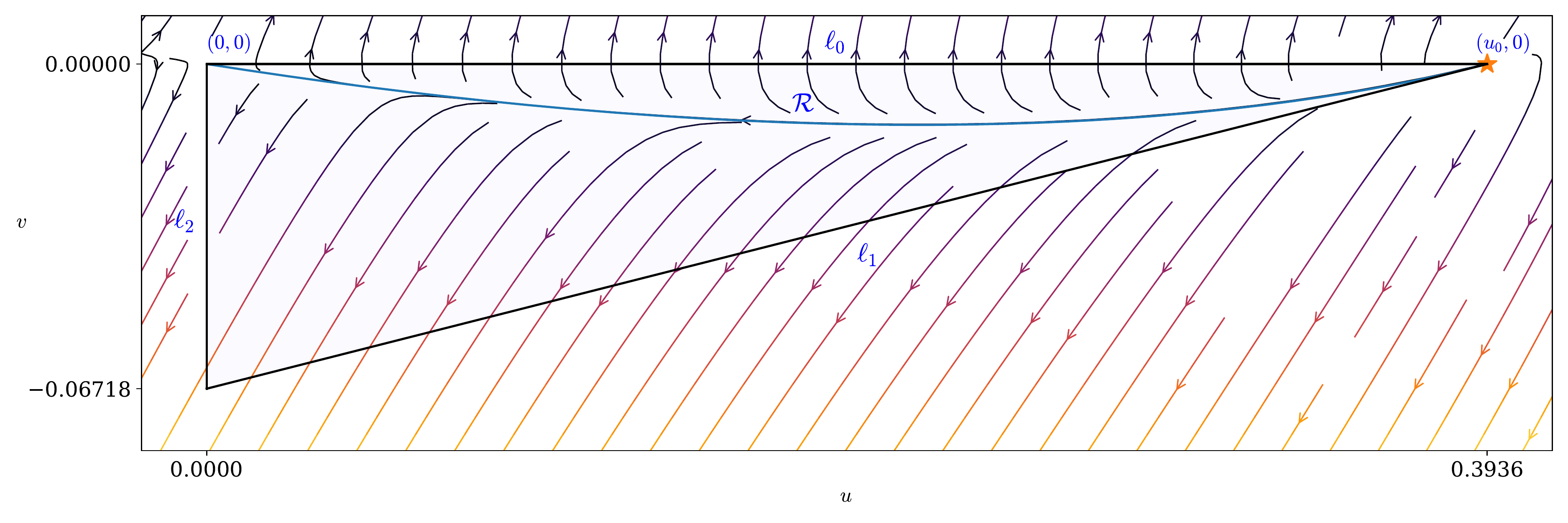}
\caption{Dissipation dominated solution ($c=1.3$, $\delta=0.2$, $\varepsilon=1.2$) enclosed by the triangle $\mathcal{T}$ in the phase-space and the corresponding streamlines of the flow}\label{fig:triangle}
\end{figure}

We have already excluded the possibility of the orbit to intersect the segment $\ell_2$ since $u\not=0$. We show that the orbit $\mathcal{R}$ does not cross the segment $\ell_0=\{(u,v):v=0, 0\leq u\leq u_0\}$.
Since the orbit approaches the origin from the fourth quadrant as $\xi\to+\infty$, assume for contradiction that $\xi_0$ is the (largest) value of $\xi$ for which the orbit intersects the segment $\ell_0$ for the last time in the sense that $v(\xi)<0$ for all $\xi>\xi_0$ and $v(\xi_0)=0$. (For $\xi<\xi_0$ the function $v(\xi)$ can be anything, positive or negative or zero). This means that $v'(\xi_0)\leq 0$. From the second equation of (\ref{eq:firstordera}) we have that
$$v'(\xi_0)=cu(\xi_0)+\frac{u(\xi_0)}{u(\xi_0)-c}-\frac{1}{2}u^2(\xi_0)>0\ ,$$
because we have $0<u(\xi)<u_0$. This is a contradiction, and this means that the orbit doesn't intersect the segment $\ell_0$. 

We will prove that the orbit $\mathcal{R}$ doesn't intersect the segment $\ell_1$ and since the orbit approaches the origin, this means that the whole orbit is confined to the triangle $\mathcal{T}$.

We proceed via contradiction.  We know that the orbit is inside the triangle $\mathcal{T}$ as $\xi\to+\infty$ since it converges to the origin through $Q_4$.
Assume that the orbit intersects the segment $\ell_1$ and that $\xi_0$ is the smallest value such that $(u(\xi_0),v(\xi_0))\in \ell_1$. It follows that $0\leq u(\xi_0)\leq u_0$ and $$(\delta c)u'(\xi_0)=v(\xi_0)= m(u(\xi_0)-u_0)<0\ .$$
Thus, the orbit $\mathcal{R}$ at the point of intersection must have $v'(\xi_0)/u'(\xi_0)\leq m$, because otherwise $u'(\xi_0)>0$. Dividing the equations of the system (\ref{eq:firstordera}), and using the definitions of (\ref{eq:critpo}) we have
$$
\begin{aligned}
m\geq \frac{v'(\xi_0)}{u'(\xi_0)} &=\frac{cu(\xi_0)+\frac{u(\xi_0)}{u(\xi_0)-c}-\frac{1}{2}u^2(\xi_0)+\varepsilon(\delta c)^{-1} v(\xi_0)}{(\delta c)^{-1}v(\xi_0)}\\
&=\varepsilon +u(\xi_0)\frac{c+\frac{1}{u(\xi_0)-c}-\frac{1}{2}u(\xi_0)}{(\delta c)^{-1}v(\xi_0)}\\
 &=\varepsilon   -  \frac{ (\delta c)u(\xi_0)(u_+-u(\xi_0))}{2m(c-u(\xi_0))}  \ .
\end{aligned}
$$
Thus,
$$m^2-\varepsilon m + (\delta c)u(\xi_0)\frac{1}{2}\frac{u_{+}-u(\xi_0)}{c-u(\xi_0)}\geq 0\ .$$
On the other hand 
{\small 
$$
\begin{aligned}
m^2-\varepsilon m + (\delta c)u(\xi_0)\frac{1}{2}\frac{u_{+}-u(\xi_0)}{c-u(\xi_0)} &= m^2-\varepsilon m +(\delta c)\alpha(c)+ (\delta c)\left(\frac{u(\xi_0)}{2}\frac{u_{+}-u(\xi_0)}{c-u(\xi_0)}-\alpha(c)\right)\\
&=(\delta c)\left(\frac{u(\xi_0)}{2}\frac{u_{+}-u(\xi_0)}{c-u(\xi_0)}-\alpha(c)\right)\\
&<0\ .
\end{aligned}
$$
}
To see this, it suffices to prove that  $u(u_+-u)<2\alpha(c)(c-u)$ for all $0\leq u< u_0$, or equivalently that
$$u^2-(2\alpha+u_+)u+2\alpha(c) c>0 \ .$$
For this, define $D(u)=u^2-(2\alpha+u_+)u+2\alpha(c) c$. Then we have
$$\frac{d}{du}D(u)=2u-(2\alpha(c)+u_+)\ .$$
Since $u_0+u_+=3c$ and $\alpha(c)=u_0-c+c/(u_0-c)^2$, we have
$$
\begin{aligned}
\frac{d}{du}D(u_0)&=2u_0-2\alpha(c)-u_+\\
&=2u_0-2\left(u_0-c+\frac{c}{(u_0-c)^2}\right)-3c+u_0\\
&=(u_0-c)-\frac{2c}{(u_0-c)^2}<0\ .
\end{aligned}
$$
Moreover,
$$\begin{aligned}
D(u_0)&=u_0^2-(2\alpha(c)+3c-u_0)u_0+2\alpha(c) c\\
&=-\frac{c}{c-u_0}[u_0^2-3cu_0+2(c^2-1)]\\
&=0\ ,
\end{aligned}
$$
where for the last equality please see the proof of Lemma \ref{lem:asymptot1}.

Since $D(u)$ is decreasing, we conclude that $D(u)>D(u_0)=0$ for $u<u_0$. Therefore, for $c>1$, and since $u(\xi_0)<u_0$ we get that  
$$m^2-\varepsilon m + (\delta c)u(\xi_0)\frac{u_{+}-u(\xi_0)}{c-u(\xi_0)}<0\ ,$$
which is a contradiction.

The fact that the orbit remains into the triangle $\mathcal{T}$ can be implied by the directionality of the streamlines in Figure \ref{fig:triangle}, which ensures that if the orbit $\mathcal{R}$ exits from one of the sides of the triangle, then it will never be able to return and asymptotically converge to the origin from the fourth quadrant. The study of the flow through the edges of the triangle $\mathcal{T}$ follows the exact same calculations presented in this proof.

Thus, $u(\xi)$ decreases monotonically from $u_0$ to $0$ as $\xi$ increases. Since $v(\xi)<0$ for all $\xi$ and $v(\xi) \to 0$ as $\xi \to \pm \infty$, the intermediate
value theorem guarantess there is $\xi_1\in\mathbb{R}$ such that  $v'(\xi_1)=u''(\xi_1)=0$. 
If $v'(\xi)=0$, then from the second equation of the system (\ref{eq:firstordera}) we observe that
$$v''(\xi)=-u'(\xi)\frac{c+(u(\xi)-c)^3}{(u(\xi)-c)^2}\ .$$
Since $0<u(\xi)<u_0<c$ and $u'(\xi)<0$ for all $\xi$, we have the following possibilities:
\begin{enumerate}
\item [(i)] If $0<u(\xi)<c-\sqrt[3]{c}$ then $v''(\xi)<0$ and $\xi_1$ is a strict local maximum for $v$
\item [(ii)] If $c-\sqrt[3]{c}<u(\xi)<u_0$ then $v''(\xi)>0$ and $\xi_1$ is a strict local minimum for $v$
\item [(iii)] If $u(\xi)=c-\sqrt[3]{c}$ then $v''(\xi)=0$ and $\xi_1$ is a saddle point for $v$
\end{enumerate}
It is worth mentioning that $c-\sqrt[3]{c}<u_0$ for $c>1$. Thus, for the local minimum $\xi_1$ we have that $c-\sqrt[3]{c}<u(\xi_1)<u_0$. Assume for contradiction that there is another $\xi_1'\not=\xi_1$ local minimum for $v$. Without loss of generality assume that $\xi_1'<\xi_1$. By the mean value theorem we have again that there is a local maximum $\bar{\xi}$ with $\xi_1'
<\bar{\xi}<\xi_1$. Since $u$ is strictly decreasing, we have that $u(\xi_1')>u(\xi)>u(\xi_1)>c-\sqrt[3]{c}$ which contradicts (ii). Therefore, if for some $\xi\not=\xi_1$ we have $v'(\xi)=0$, then  $0<u(\xi)\leq c-\sqrt[3]{c}$ and $\xi_1<\xi$. If  $u(\xi)<c-\sqrt[3]{c}$ this means that $\xi$ is strictly local maximum, and also $v(\xi)<0$. Since $v\to 0$ as $\xi\to +\infty$ this means that there will be another local minimum for $\xi<\xi_1''$ which is a contradiction to (i). Finally, if $v'(\xi)=0$ and $u(\xi)=c-\sqrt[3]{c}$, then from the second  equation of (\ref{eq:firstordera}) we have that
$$ v(\xi)=-\frac{\delta c}{\varepsilon}\left(2-3\sqrt[3]{c^2}+c^2\right)<0, \quad\text{ for $c>1$}\ .$$
In the proof of Theorem \ref{thrm:existence} we saw that $v(\xi)\geq -(\delta c)(2-3\sqrt[3]{c^2}+c^2)/\varepsilon$, and thus $\xi$ cannot be a saddle point. Hence $\xi_1$ is the only point where $v'(\xi_1)=u''(\xi_1)=0$ and $c-\sqrt[3]{c}<u(\xi_1)<u_0$. Moreover, $v'(\xi)$ has one sign for $\xi>\xi_1$ and similarly for $\xi<\xi_0$. Since $v(\xi_1)<0$ and $v(\xi)\to 0$ as $\xi\to+\infty$, then for $\xi>\xi_1$ we have $v'(\xi)>0$. Similarly, for $\xi<\xi_1$ we have $v'(\xi)<0$. Thus, we conclude that there is a unique inflection point $\xi_1$ of $u$ such that $(\xi-\xi_1)u''(\xi)>0$ for $\xi\not=\xi_1$, and the proof is complete.
\end{proof}

Since $\eta' = \frac{c}{(u-c)^2} u'$, the sign $\eta'$ follows the sign of $u'$. We turn now to the complementary regime $\varepsilon^2<4\delta c\alpha(c)$, where dispersion dominates dissipation. We call this the regime of
moderate dispersion, and note that there the solution resembles to an undular bore comprised of a smooth wave front followed by a series of oscillations that decay
to nil as $\xi \to -\infty$: 

\begin{theorem}\label{thrm:undularbore}
Let $u(\xi)$ be the unique (up to horizontal translations) solution to (\ref{eq:main}) for values $\varepsilon$, $\delta$ and $c$ such that $\varepsilon^2<4\delta c\alpha(c)$. Then for all $\xi\in\mathbb{R}$, $u(\xi)>0$, and 
\begin{itemize}
\item [(a)] If $M_0=\sup_\xi u(\xi)$, then $M_0$ is attained at a unique value $\xi=z_0$, and for $\xi>z_0$, $u'(\xi)<0$.
\item [(b)] There is a $\xi_0>z_0$ such that $(\xi-\xi_0)u''(\xi)>0$, for all $\xi>z_0$, $\xi\not=\xi_0$.
\item [(c)] The solution $u(\xi)$ has an infinite number of local maxima and minima. These are taken on at points $\{z_i\}_{i=0}^\infty$ and $\{w_i\}_{i=1}^\infty$ where $z_i>w_{i+1}>z_{i+1}$, for all $i\geq 0$, and $\lim_{i\to\infty}z_i=\lim_{i\to\infty}w_i=-\infty$. Moreover, 
$u_0<u(z_{i+1})<u(z_i)$ and  $u_0>u(w_{i+1})>u(w_i)$.
\end{itemize}
\end{theorem}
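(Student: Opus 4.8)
I work throughout with the heteroclinic orbit $\mathcal{R}=\{(u(\xi),v(\xi)):\xi\in\mathbb{R}\}$ furnished by Theorem \ref{thrm:existence}, where $v=(\delta c)u'$ and $(u,v)$ solves the first order system (\ref{eq:firstordera}); recall $\mathcal{R}\subset\mathcal{M}=\{0<u<c\}$, with $(u(\xi),v(\xi))\to(u_0,0)$ as $\xi\to-\infty$ and $\to(0,0)$ as $\xi\to+\infty$. Since $v=(\delta c)u'$, the critical points of $u$ are exactly the zeros of $v$, and by Lemma \ref{lem:critical} such a zero is a strict local maximum of $u$ when $u>u_0$ there, a strict local minimum when $u<u_0$, and cannot occur at $u=u_0$. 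Two facts are used repeatedly. First, the Liapunov function $V=\tfrac12 v^2+G(u)$ from (\ref{eq:liapunov})--(\ref{eq:potentialf}) is nondecreasing in $\xi$ along $\mathcal{R}$ and strictly increasing on every interval on which $v$ is not identically zero; and, writing $G'(u)=-(\delta c)f(u)$ with $f(u)=cu+\tfrac{u}{u-c}-\tfrac12u^2$ and $f'(u)=-\tfrac{(u-c)^3+c}{(u-c)^2}$, one checks that $G$ is strictly decreasing on $(0,u_0)$, strictly increasing on $(u_0,c)$, with $G(0)=0>G(u_0)$. Second, under the standing hypothesis $\varepsilon^2<4\delta c\alpha(c)$ the point $(u_0,0)$ is a spiral (eigenvalues $\Lambda_\pm$ complex with positive real part), as already recorded before the statement.

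\textbf{Item (a).}
Since the orbit enters $(0,0)$ through the fourth quadrant $Q_4=\{u>0,\ v<0\}$, we have $v<0$ for all large $\xi$. The orbit cannot have $v<0$ for every $\xi$, for then $u$ would converge monotonically to $u_0$ as $\xi\to-\infty$, impossible at a spiral point (see item (c)); hence $v$ has a largest zero $z_0$. Then $v<0$, i.e. $u'<0$, on $(z_0,\infty)$; and $v'(z_0)\le0$, while $v'(z_0)=f(u(z_0))$ cannot vanish (that would force $u(z_0)=u_0$, hence $u$ constant), so $v'(z_0)<0$ and $z_0$ is a strict local maximum with $u(z_0)>u_0$ by Lemma \ref{lem:critical}. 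For $\xi<z_0$, monotonicity of $V$ gives $G(u(\xi))\le V(\xi)<V(z_0)=G(u(z_0))$; together with the monotonicity of $G$ on $(u_0,c)$ when $u(\xi)>u_0$, and trivially when $u(\xi)\le u_0$, this yields $u(\xi)<u(z_0)$. Hence $M_0=\sup_\xi u=u(z_0)$, attained only at $z_0$, and $u'<0$ on $(z_0,\infty)$.

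\textbf{Item (b).}
On $(z_0,\infty)$ we have $v(z_0)=0=\lim_{\xi\to+\infty}v(\xi)$ with $v<0$ in between, so $v$ attains a minimum there, and the inflection points of $u$ are precisely the zeros of $v'$. I would show $v'$ has exactly one zero $\xi_0$ in $(z_0,\infty)$ by a sign-chasing argument: at any zero $\bar\xi$ of $v'$ the system gives $v''(\bar\xi)=f'(u(\bar\xi))\,u'(\bar\xi)$, and since $u'<0$ there the sign of $v''(\bar\xi)$ is opposite to that of $f'(u(\bar\xi))$, which changes only at $u_c=c-\sqrt[3]{c}$; as $u$ is strictly decreasing on $(z_0,\infty)$, a hypothetical second zero of $v'$ would, because $v<0$ and $v\to0$, force a third, and the alternation of the sign of $v''$ required at three successive zeros contradicts the monotone passage of $u$ through $u_c$. (The degenerate possibility $v'(\bar\xi)=0$ with $u(\bar\xi)=u_c$ is excluded exactly as in the proof of Theorem \ref{thrm:regshock}.) Consequently $v'<0$ on $(z_0,\xi_0)$ and $v'>0$ on $(\xi_0,\infty)$, i.e. $u''<0$ on $(z_0,\xi_0)$ and $u''>0$ on $(\xi_0,\infty)$, which is exactly $(\xi-\xi_0)u''(\xi)>0$ for $\xi>z_0$, $\xi\ne\xi_0$.

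\textbf{Item (c) and the main difficulty.}
Because $(u_0,0)$ is a spiral, the orbit winds around it infinitely often as $\xi\to-\infty$; the cleanest way I know to see this is to pass to polar coordinates in a linearizing chart centred at $(u_0,0)$, where the angular variable obeys $\dot\theta\to\operatorname{Im}\Lambda_\pm\ne0$ so that $\theta\to\pm\infty$ along $\mathcal{R}$, the nonlinear remainder being negligible near the fixed point. In particular $v$ has infinitely many zeros, accumulating only at $-\infty$ (a finite accumulation point $L$ would give $u'(L)=u''(L)=0$, forcing $u$ constant by Lemma \ref{lem:critical}). Between consecutive zeros of $v$ its sign is constant, and evaluating $v'=f(u)$ at the two endpoints shows the zeros alternate between local maxima (where $u>u_0$) and local minima (where $u<u_0$); listing the maxima $z_0>z_1>\cdots$ and the minima $w_1>w_2>\cdots$ gives the interlacing $z_i>w_{i+1}>z_{i+1}$. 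Finally, at each extremum $v=0$, so $V=G(u)$ there; strict monotonicity of $V$ along $\mathcal{R}$ gives $G(u(z_{i+1}))<G(u(z_i))$ with both arguments in $(u_0,c)$ where $G$ is increasing, hence $u_0<u(z_{i+1})<u(z_i)$, and $G(u(w_{i+1}))<G(u(w_i))$ with both arguments in $(0,u_0)$ where $G$ is decreasing, hence $u_0>u(w_{i+1})>u(w_i)$. The genuine obstacle is the first step: ``$\mathcal{R}\to(u_0,0)$'' together with ``$(u_0,0)$ is a spiral'' does not by itself yield infinitely many oscillations (a stable node and a stable focus are topologically equivalent), so one must quantitatively exploit the rotation carried by the complex eigenvalues; the uniqueness of the inflection point in (b) similarly rests on the delicate sign bookkeeping indicated above.
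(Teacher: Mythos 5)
Your proposal is correct, and its skeleton coincides with the paper's: the oscillatory tail comes from the quantitative spiral asymptotics at $(u_0,0)$ (the paper invokes Hartman's asymptotic representation \eqref{eq:harman}, you invoke the rotation of the angular variable carried by ${\rm Im}\,\Lambda_\pm\neq 0$ — the same point, and you rightly flag that mere topological convergence to the equilibrium would not suffice); the wave front is obtained by showing the orbit cannot leave the fourth quadrant through the segment $\{v=0,\ 0\le u\le u_0\}$ (your computation $v'(z_0)=f(u(z_0))<0$, forcing $u(z_0)>u_0$, is exactly the paper's contradiction on $\ell_0$); and the unique inflection point for $\xi>z_0$ follows from the sign of $v''=f'(u)u'$ at zeros of $v'$ and the single crossing of $u_c=c-\sqrt[3]{c}$ by the decreasing $u$, which is precisely the Theorem~\ref{thrm:regshock} bookkeeping to which the paper simply refers (your deferral of the degenerate case $u=u_c$ to that proof matches the paper). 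The one genuine difference is in part (c) and in the uniqueness of the supremum in (a): the paper deduces $u_0<u(z_{i+1})<u(z_i)$ and $u(w_{i+1})>u(w_i)$ from the impossibility of a planar autonomous orbit intersecting itself, whereas you evaluate the strictly increasing Liapunov function $V=\tfrac12 v^2+G(u)$ at the extrema (where $V=G(u)$) and use that $G$ is strictly decreasing on $(0,u_0)$ and strictly increasing on $(u_0,c)$. Both arguments are valid; yours yields the ordering of the extrema quantitatively and, as a bonus, gives an explicit proof that $M_0$ is attained only at $z_0$ (the paper leaves this essentially implicit), at the cost of the small extra computation $G'(u)=-(\delta c)f(u)$ and its sign analysis.
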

\begin{proof}
Since $(u_0,0)$ is a spiral point in the case of $\varepsilon^2<4\delta c\alpha(c)$ we expect that the solution will be oscillatory as $\xi\to -\infty$ and more specifically of the form
\begin{equation}\label{eq:harman}
(u(\xi)-u_0,v(\xi))=Ce^{\mu \xi}(\cos(\nu\xi+\theta_0+o(1)),\sin(\nu\xi+\theta_0+o(1)))\ ,
\end{equation}
as $\xi\to-\infty$, where $\mu={\rm Re} (\Lambda_+)$ and $\nu={\rm Im}(\Lambda_+)\not=0$, with $\Lambda_+$ as defined in (\ref{eq:eigen2}), while $C$ and $\theta_0$ depend on the solution \cite{Hartman2002}. Therefore, there are infinite many points where $v(\xi)$ vanishes. According to Lemma \ref{lem:critical} these points will be isolated and are strict local maxima or minima of $u$. Moreover, a local maximum corresponds to a point $\xi$ where $u(\xi)>u_0$, and a local minimum to a point $\xi$ where $0<u(\xi)<u_0$. 

Since $(u,v)\to(0,0)$ as $\xi\to\infty$ and $u>0$ for all $\xi$ we have that $v(\xi)<0$ for $\xi\to\infty$ and thus the orbit $\mathcal{R}$ lies in the fourth quadrant for large values of $\xi$, while the orbit cannot intersect the segment $\ell_0=\{(u,v):v=0,~ 0\leq u\leq u_0\}$ from $Q_4$. To see this (as in the proof of Theorem \ref{thrm:regshock}) assume that there is a point $\xi_0$ such that $(u(\xi_0),v(\xi_0))\in \ell_0$, i.e. $v(\xi_0)=0$ and $u(\xi_0)\leq u_0$, and $v(\xi)<0$ for $\xi>\xi_0$. This means that $v'(\xi_0)\leq 0$, and from the second equation of (\ref{eq:firstordera}) we have that
$$v'(\xi_0)=cu(\xi_0)+\frac{u(\xi_0)}{u(\xi_0)-c}-\frac{1}{2}u^2(\xi_0)>0\ ,$$
since $0< u(\xi)< u_0$, which is a contradiction. Thus $\mathcal{R}$ must exit fourth quadrant at a point $z_0$ where $u(z_0)>u_0$ and $v(z_0)=0$. And $z_0$ will be a strict local maximum of $u$ and $u'(\xi)<0$ for $\xi>z_0$. Following the same steps as in the proof of Theorem \ref{thrm:regshock} we conclude that there is a point $\xi_0>z_0$ where $u''(\xi_0)=0$ and $u''(\xi)>0$ for $\xi>\xi_0$, and $u''(\xi)<0$ for $\xi<\xi_0$.

Let $z_1<z_0$ be the next local maximum of $u$ and let $w_1$ be the unique intervening local minimum. Inductively we define the decreasing sequences $\{z_i\}_{i=0}^\infty$ and $\{w_i\}_{i=1}^\infty$ of local maxima and minima, respectively. Due to Lemma \ref{lem:critical} we have that $u_0<u(z_i)<\bar{u}$ and $u(w_i)<u_0$. Moreover, $u(z_{i+1})<u(z_i)$ and $u(w_{i+1})>u(w_i)$ for all $i$ because otherwise the orbit $\mathcal{R}$ will intersect itself which is impossible since system (\ref{eq:firstordera}) is autonomous, \cite{W2003}. For the same reason, $w_i$ and $z_i$ must accumulate at $-\infty$ and the solution cannot become constant $u_0$ in finite time.
\end{proof}

Figure \ref{fig:dsw1} shows that as the speed of the oscillatory shock wave increases, the solution tends to become weakly singular \cite{MDAZ2017,DMM2018}. This is true also for the regularized shock wave as it is clear in Figure \ref{fig:dsw1}. Figure \ref{fig:dsw2} shows that as the speed of the traveling wave increases, the value $u_0$ of the tail converges to the maximum amplitude of the traveling wave. The limit $u_0$ of the tail is depicted with a star in Figure \ref{fig:dsw2}.
The numerical runs indicate that as the traveling wave speed increases  the tops of the free surface elevation become very steep. This should be
contrasted to the behavior of traveling waves for the viscous KdV equation.

\begin{figure}[ht!]
\includegraphics[width=\textwidth]{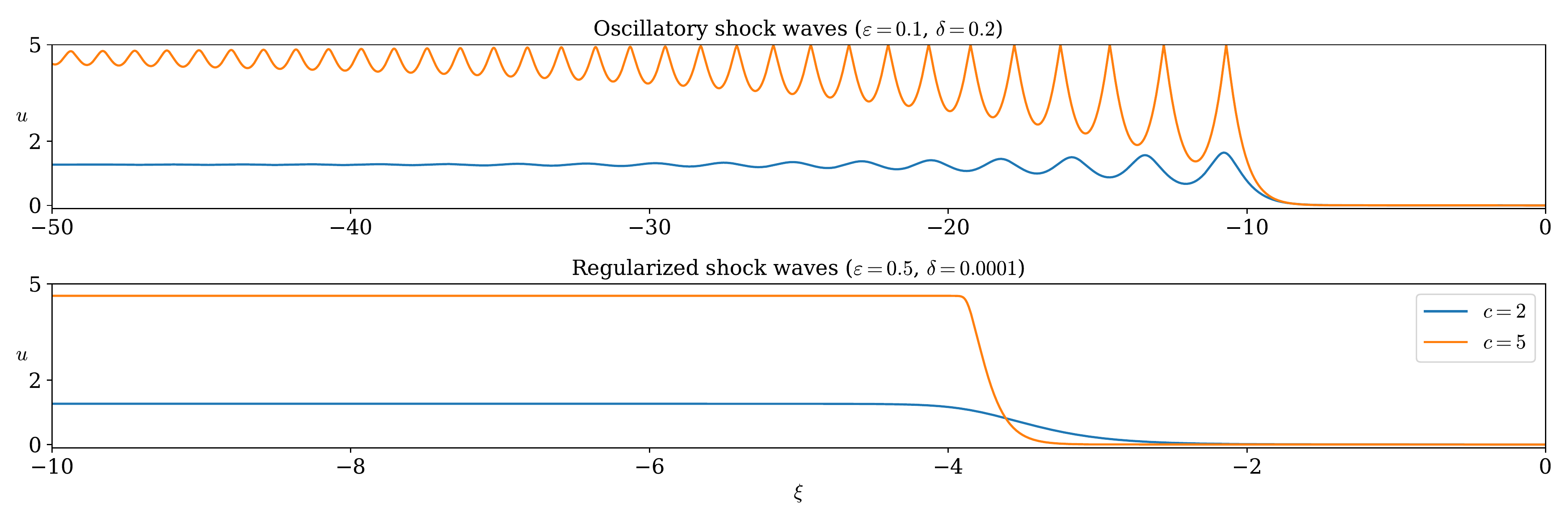}
\caption{Oscillatory and regularized shock waves of the dissipative Boussinesq system for $c=2$ and $c=5$}\label{fig:dsw1}
\end{figure}

\begin{figure}[ht!]
\includegraphics[width=\textwidth]{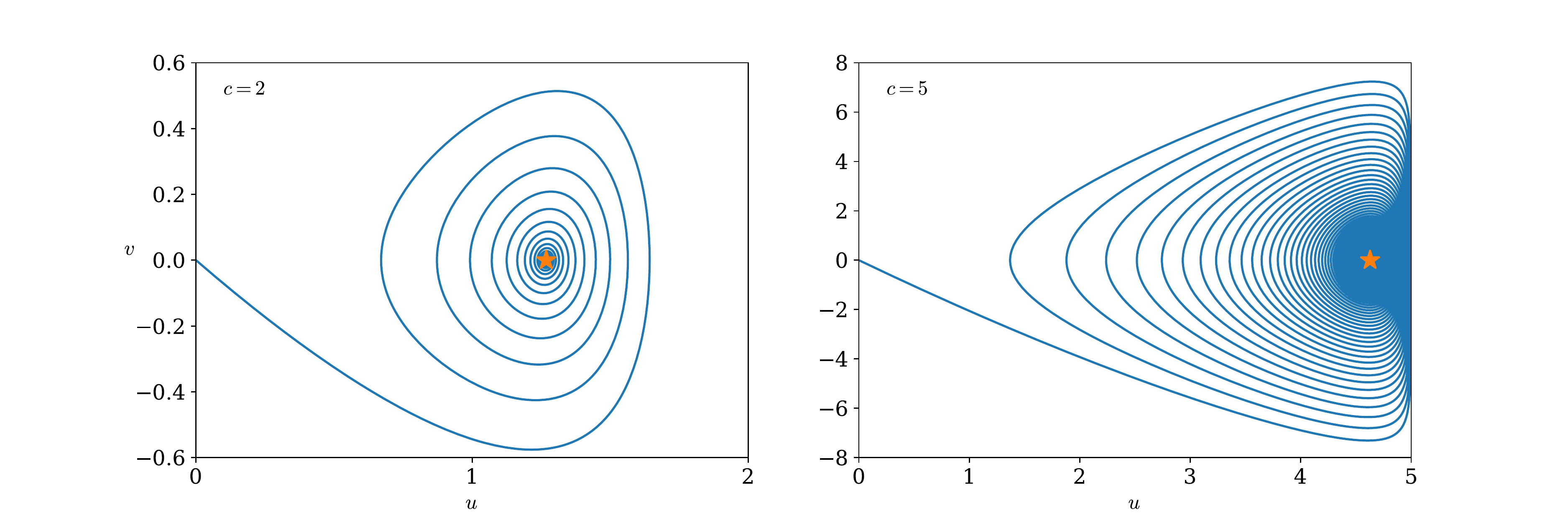}
\caption{Heteroclinic orbits representing diffusive-dispersive shock waves for $c=2$ and $c=5$}\label{fig:dsw2}
\end{figure}

%\tcb{The numerical runs indicate that as the traveling wave speed increases  the tops of the free surface elevation become very steep. This should be
%contrasted to the behavior of traveling waves for the viscous kdV equation. To make this point clear, 
%we present a comparison of  oscillatory shocks between the KdV-Burgers equation 
%$$u_t+u_x+\tfrac{3}{2}uu_x+\delta u_{xxx}=\varepsilon u_{xx}$$
% and the dissipative Boussinesq system (\ref{eq:boussinesqs2}) with speed $c=5$. Note that the particular KdV equation can be derived from the classical Boussinesq system assuming one-way propagation data. The two solutions  are presented in Figure \ref{fig:comparisonKdV} and apparently have significant differences:
%(i)  The solution of the Boussinesq system exhibits steep slopes compared to the solution of the KdV-Burgers equation; 
%(ii) also their limiting values are different. For the KdV-Burgers equation we may compute $u_0=4(c-1)/3$ and $\bar{u}=2(c-1)$, which yields
%$\lim_{c\to+\infty}\frac{u_0}{c}=\frac{4}{3}\quad\text{ and }\quad \lim_{c\to+\infty}\frac{\bar{u}}{c}=2$, while for the viscous Boussinesq system 
%we see in Proposition \ref{prop:maxampl} that $\lim \frac{u_0}{c} = \lim \frac{\bar u}{c} = 1$. This explains the 
%apparently quite different dynamical pictures  observed in Figure \ref{fig:comparisonKdV}.
%}
%
%\begin{figure}[ht!]
%\includegraphics[width=\textwidth]{}
%\caption{Comparison between the oscillatory shock waves of the dissipative Boussinesq system and the KdV equation for  $c=5$}\label{fig:comparisonKdV}
%\end{figure}

As it is demonstrated so far, the existence and the shape of an undular bore as a traveling wave is related to the amount of dissipation in the flow $\varepsilon$ and the depth of the water through the parameter $\delta$. We close this section with a comparison between laboratory data and the predictions of the current study.

In laboratory experiments (\cite{T1994,KC2006,Chanson2010}) it has been observed that for Froude numbers $1<c\leq 1.3$ the turbulence in the flow can be minimal, and thus it is expected that dissipation due to bottom friction or surface tension can only affect slightly the shape of an undular bore. On the other hand, for Froude number $c>1.3$ large turbulence is present in the flow, especially affecting the wave-front, while for $c>1.4$ wave breaking can occur at large scales.

We begin with comparing the diffusive-dispersive shock wave of (\ref{eq:boussinesqs2}) with a smooth undular bore where the dissipation should not be important. In Figure \ref{fig:exper2} we compare an oscillatory shock wave for Froude number $c=1.11$ and dissipation parameter $\varepsilon=0.06$ of (\ref{eq:boussinesqs2}) with a laboratory experiment of \cite{Chanson2010}. The independent and dependent variables in Figure \ref{fig:exper2}, $t$ and $\eta$, are scaled and coincide with the standard time variable scaled by $\sqrt{g/D}$ and surface elevation scaled by $D$.  Although there is no wave-breaking or turbulence effects in the particular case, dissipation can be caused due to bottom friction or even due to surface tension effects, thus the presence of the dissipative term can help in obtaining perfect agreement between laboratory data and theoretical estimates.

\begin{figure}[ht!]
\includegraphics[width=\textwidth]{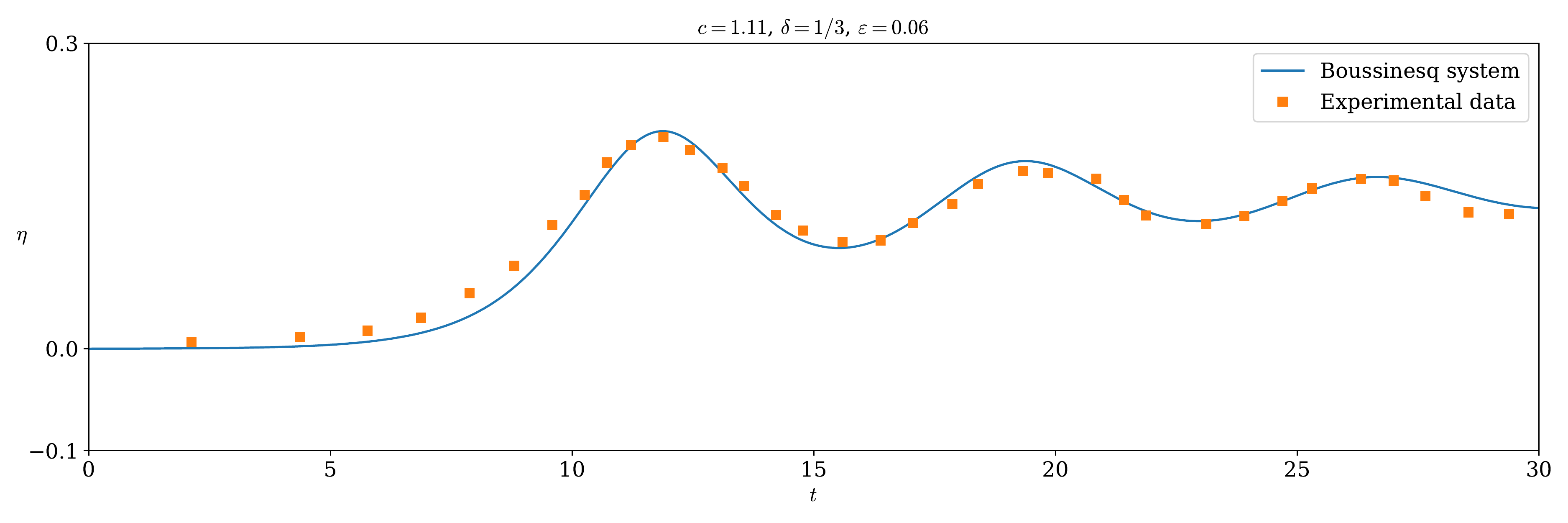}
\caption{Comparison between a diffusive-dispersive shock wave of system (\ref{eq:boussinesqs2}) and experimental data of \cite{Chanson2010}}\label{fig:exper2}
\end{figure}

\begin{figure}[ht!]
\includegraphics[width=\textwidth]{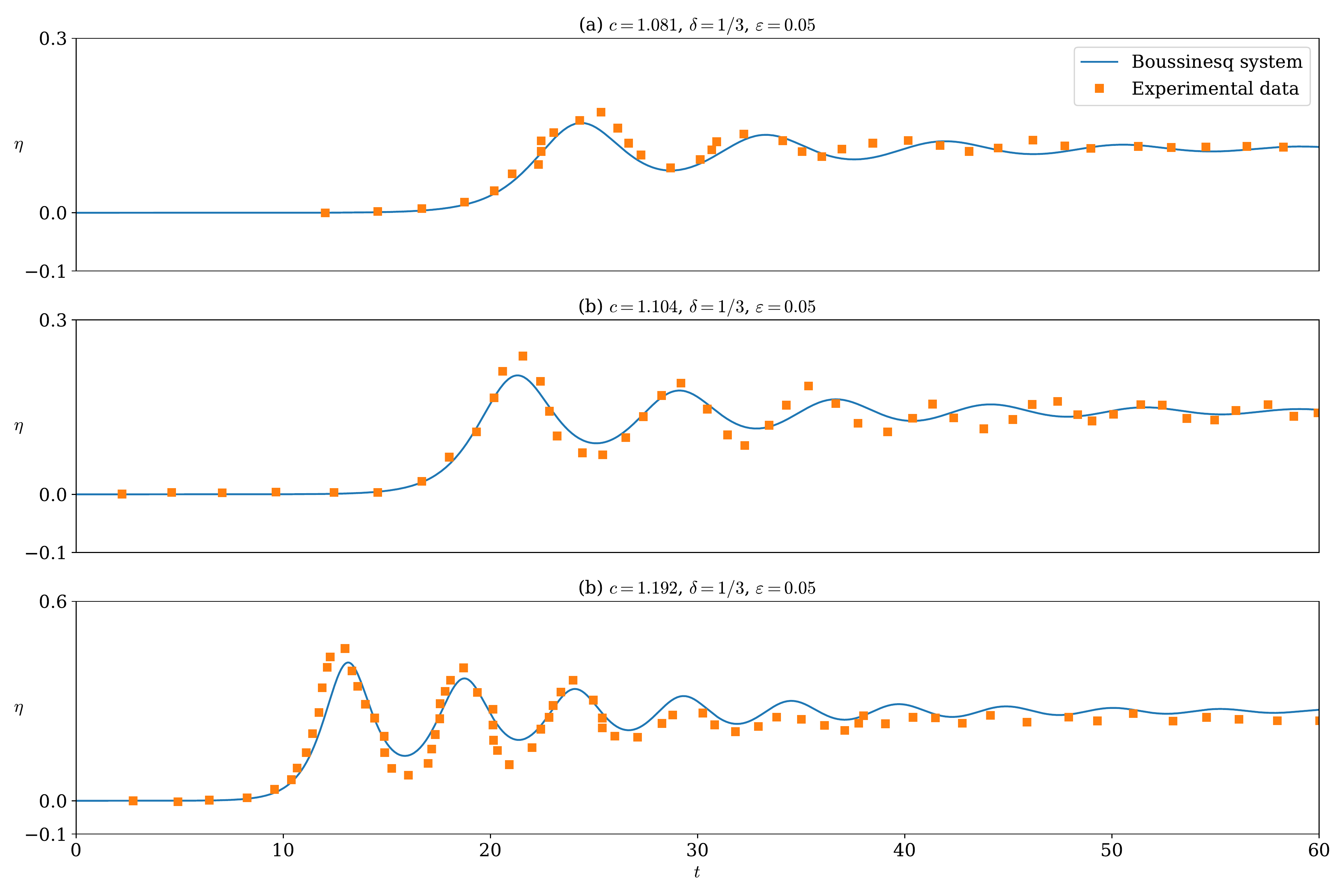}
\caption{Comparison between diffusive-dispersive shock waves of system (\ref{eq:boussinesqs2}) and experimental data of \cite{SZ2002}}\label{fig:exper1}
\end{figure}

In the same spirit, we compare the solutions obtained from system (\ref{eq:boussinesqs2}) with  experimentally generated undular bores with Froude numbers $c=1.081$, $1.104$ and $1.192$, respectively, \cite{SZ2002}. These waves propagate without breaking and with clear undulations. As it is observed also in \cite{T1994}, turbulence becomes visible for values of $c>1.2$. For this reason the dissipation parameter was taken $\varepsilon=0.05$ in all cases, while similar values of $\varepsilon$ will not cause major differences. Figure \ref{fig:exper1} presents the results in the same scaled variables as before. Discrepancies that were expected to the phase of the modulations due to the suboptimal linear dispersion characteristics of the Boussinesq system at hand are small.

 \begin{figure}[ht!]
\includegraphics[width=\textwidth]{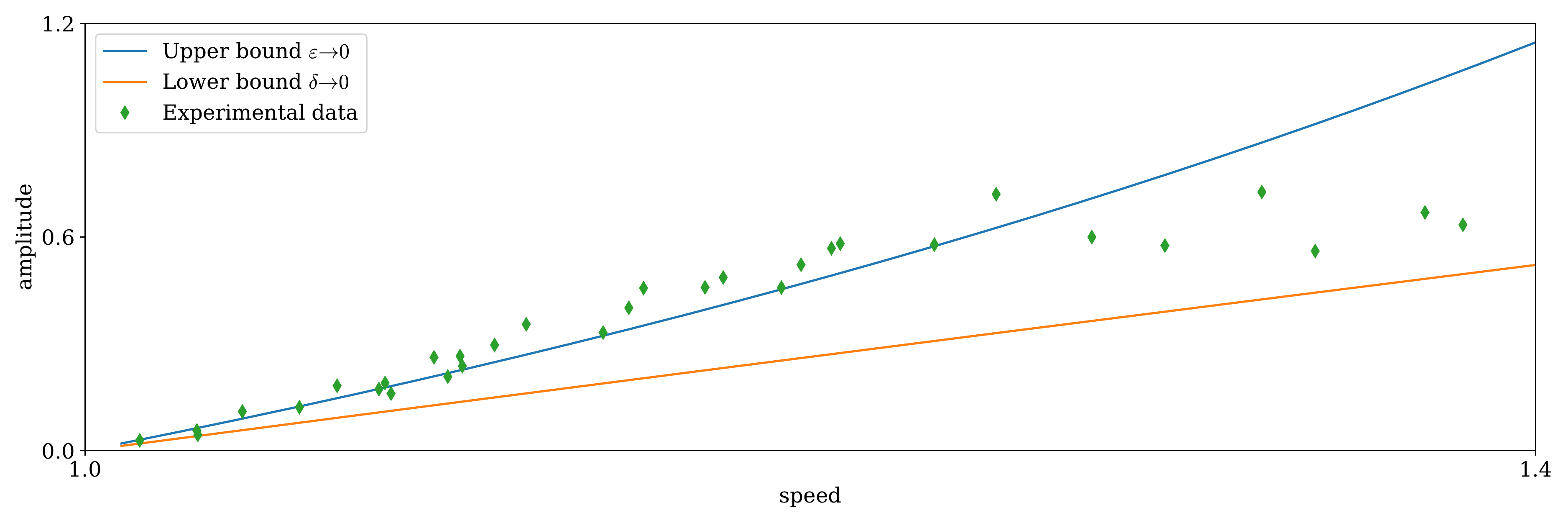}
\caption{The amplitude of diffusive-dispersive shock wave as a function of its speed $c$ in comparison with experimental data of \cite{Favre1935}, \cite{SZ2002} and \cite{T1994}}\label{fig:speedampl}
\end{figure}

Apparently, the Boussinesq approximation predicts the shape of the undular bore quite accurately. Figure \ref{fig:speedampl} presents the possible extreme values for the amplitude of a diffusive-dispersive shock wave as function of its speed. The maximum possible value of the amplitude is numerically generated by solving using Newton's method the speed-amplitude relationship for solitary waves (\ref{eq:spamp}) with respect to $\eta$ for various values of $c\in[1,1.4]$. Such an extreme amplitude can be obtained when $\varepsilon \to 0$ and $\delta$ remains fixed. The lower bound is obtained when $\delta\to 0$ while $\varepsilon$ remains fixed and is the value for the limit $\eta_0$, where $\eta_0=\frac{u_0}{c-u_0}$ and $u_0$ as in Lemma \ref{lem:asymptot1}. Note that this value is independent of $\varepsilon$. An alternative approximation of the amplitude is  given by the explicit approximation \cite{T1994} 
$$c=\sqrt{1+\frac{3}{2}\eta_0+\frac{1}{2}\eta_0^2}\ .$$
In Figure \ref{fig:speedampl} we observe that the maximum values recorded in the experiments of \cite{Favre1935}, \cite{SZ2002} and \cite{T1994} are all closer to the Boussinesq approximation (\ref{eq:spamp}) for the amplitude of the solitary wave, and for increasing values of the speed, the dissipation due to turbulence becomes more important leading to a dominant dissipation parameter. Specifically, the speed-amplitude line follows the non-dissipative relationship for Froude numbers less than $c\approx 1.25$, while it reaches its lowest dissipative branch for Froude number $c\approx 1.4$. These results validate also the asymptotic theory for non-dissipative systems which states that the leading wave of a dispersive shock wave is a solitary wave \cite{EH2016}. Figure \ref{fig:break} presents a comparison between laboratory data of \cite{LC2015} with the profile of a weakly oscillatory shock wave (almost regularized shock wave) with Froude number $c=1.45$. Wave breaking causes dissipation due to turbulence and we found with experimentation that for the value $\varepsilon=0.6$ the solution of the dissipative Peregrine system fits well to the experimental data. For larger values of speed $c$ and when more severe wave-breaking is present then the shape of the wave cannot be predicted by the particular dissipative model and more sophisticated wave-breaking  techniques must be incorporated in numerical simulations \cite{CC2021}.

 \begin{figure}[ht!]
\includegraphics[width=\textwidth]{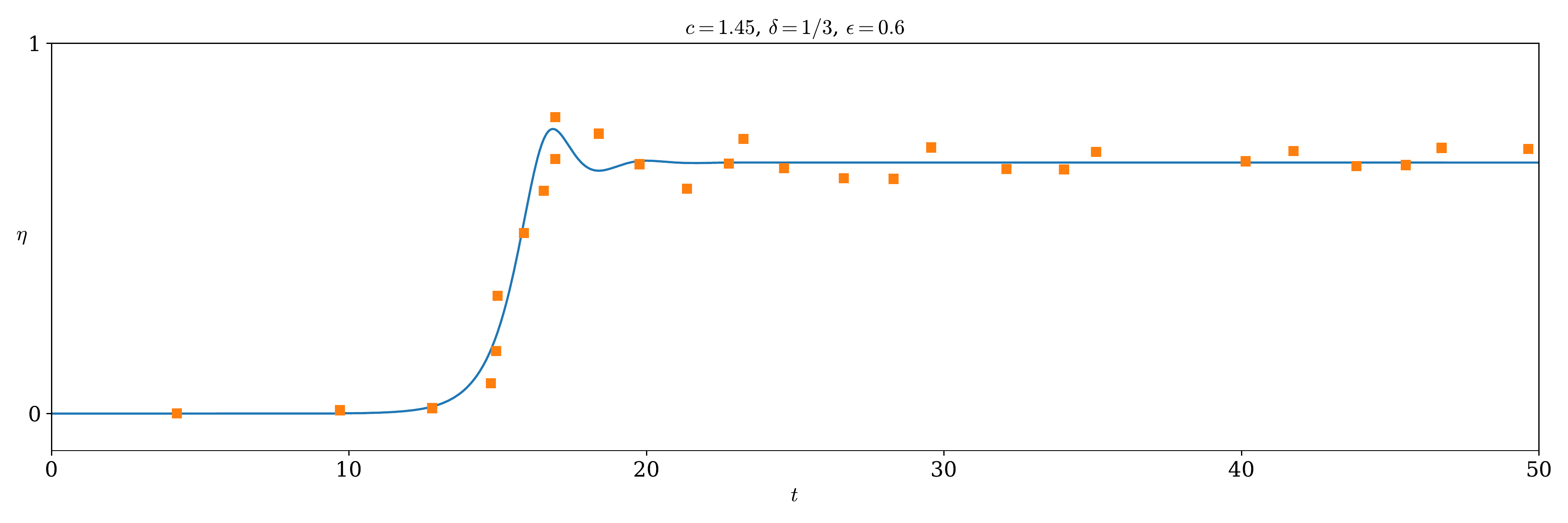}
\caption{Weakly oscillatory shock wave - Breaking undular bore}\label{fig:break}
\end{figure}

It is noted that all the profiles were generated numerically by solving the system (\ref{eq:firstordera}) with appropriate initial conditions using the implicit Runge-Kutta method of the Radau IIA family of order 5 as it is implemented in the Python module \texttt{scipy.integrate}.  

\begin{remark}
In \cite{BMT2023} it is proved that as $\varepsilon\to 0$ and $\delta=o(\varepsilon)$, the diffusive-dispersive shock wave tends to a classical shock wave of the shallow-water equations. This justifies also the use of the term shock wave for traveling wave solutions of diffusive-dispersive nonlinear equations. 
\end{remark}

%To estimate the number of observable humps on an undular bore wavetrain we employ the asymptotic formula (\ref{eq:harman}). If $\varepsilon^2-4\delta c\alpha=-\kappa^2<0$, then we have that $\mu={\rm Re}(\Lambda_{+})=\varepsilon/2\delta c$, $\nu={\rm Im}\Lambda_{+}=\kappa/2\delta c$. Moreover assume that the highest pulse is centered at $\xi=0$. We take $C$ to be the amplitude $\bar{\eta}$ computed numerically by solving the equation (\ref{eq:spamp}). Setting as threshold for the visible solution to be $C\cdot P\ll 1$ we get that $\xi<\log P/\mu$. Then, the function $\cos(\nu \xi)$ for $\xi\in [\ln P/\mu,0]$ with period $2\pi/\nu$ will have 
%$$n=\left\llbracket \frac{\nu |\ln P|}{2 \mu \pi}\right\rrbracket +1 \ ,$$
%pulses approximately, where the square brackets $\llbracket\cdot\rrbracket$ denote the integer part. In our examples of Figure \ref{fig:exper1}, we get for $P=3\times 10^{-2}$, $\delta=1/3$, $\varepsilon=0.05$ and $c=1.081, 1.104$ and $1.192$ to obtain $n=4,5$ and $8$ humps respectively. As we will verify in the next section, it is expected that for $\varepsilon=0$ such solution will evolve into a series of solitary waves, $n$ of which are expected to be visible.
 
\section{Undular bores in the absence of dissipation}\label{sec:errors}

Although non-dissipative systems such as Peregrine's system (\ref{eq:peregrine}) are not known to possess oscillatory or regularized shock waves as traveling wave solutions,
they are often used for the description of such waves. The reason being that solutions often exhibit the format of undular bore consisting of a solitary wave front followed by undulations. Here, we prove an error estimate between the solutions of the system  (\ref{eq:boussinesqs2}) for $\varepsilon>0$ and for $\varepsilon=0$, where the last one corresponds to Peregrine's system (\ref{eq:peregrine}). 
For a study of the long-time behavior of the solutions of dissipative Boussinesq systems we refer to \cite{CG2007}.

We will use two lemmata from \cite{AB1991} which we mention here for the sake of completeness. The first lemma provides a useful inequality:
\begin{lemma}{(Lemma 1 of \cite{AB1991})}\label{lem:lemmab1}
Let $k\geq 3$ be an integer. Then there exists a constant $C=C(k)>0$ such that for any $y>0$ and $\epsilon$ in the interval $(0,1)$, the inequality
\begin{equation}
\epsilon y+y^2+\epsilon^{-1/2}y^3+\epsilon^{-1}y^4+\cdots +\epsilon^{(2-k)/2}y^k\leq C(\epsilon y+\epsilon^{2-k}y^k)\ ,
\end{equation}
is valid.
\end{lemma}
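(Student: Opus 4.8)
The plan is to reduce the inequality to a term-by-term estimate and then to a single application of the weighted arithmetic--geometric mean (Young) inequality. Write the left-hand side as $\sum_{j=1}^{k}\epsilon^{(2-j)/2}y^{j}$, with the understanding that the $j=1$ summand carries the exponent $1$ rather than $1/2$, i.e.\ it equals $\epsilon y$. Since $\epsilon y$ is itself dominated by the right-hand side, it suffices to prove that for each integer $j$ with $2\le j\le k$,
\begin{equation*}
\epsilon^{(2-j)/2}y^{j}\le \epsilon y+\epsilon^{2-k}y^{k}\qquad\text{for all }y>0,\ \epsilon\in(0,1).
\end{equation*}
Summing these $k-1$ bounds together with the trivial one for $j=1$ then yields the claim with $C=k$ (or, with marginally more bookkeeping, with a somewhat smaller constant depending only on $k$).

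For the key step I would first exploit $\epsilon<1$: since $2-j\le (2-j)/2$ whenever $j\ge 2$, we get $\epsilon^{(2-j)/2}\le \epsilon^{2-j}$, so it is enough to bound $\epsilon^{2-j}y^{j}$. Next I would interpolate the power $j$ between the two exponents $1$ and $k$ that occur on the right-hand side: set $\theta=\frac{j-1}{k-1}\in[0,1]$, so that $j=(1-\theta)\cdot 1+\theta\cdot k$. A brief check of the two exponents, $(1-\theta)+(2-k)\theta=2-j$ and $(1-\theta)+k\theta=j$, shows that
\begin{equation*}
\epsilon^{2-j}y^{j}=\bigl(\epsilon y\bigr)^{1-\theta}\bigl(\epsilon^{2-k}y^{k}\bigr)^{\theta}.
\end{equation*}
Weighted AM--GM, $a^{1-\theta}b^{\theta}\le(1-\theta)a+\theta b\le a+b$ for $a,b\ge 0$, applied with $a=\epsilon y$ and $b=\epsilon^{2-k}y^{k}$, then finishes the estimate.

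I do not expect a genuine obstacle; the content is entirely elementary and the only real danger is miscounting exponents. The two points that need care are the special exponent on the first summand ($\epsilon y$, not $\epsilon^{1/2}y$), and getting the direction of the comparison $\epsilon^{(2-j)/2}\le\epsilon^{2-j}$ correct --- this is exactly where the hypothesis $\epsilon\in(0,1)$ enters essentially, since for $\epsilon\ge1$ the asserted inequality is false. As an alternative to the interpolation one could instead fix $\epsilon$ and maximize the ratio $\epsilon^{(2-j)/2}y^{j}/(\epsilon y+\epsilon^{2-k}y^{k})$ over $y>0$, noting it tends to $0$ at both ends of the interval when $j<k$ and stays $\le1$ when $j=k$; but the Young-inequality route is shorter and supplies the explicit constant directly.
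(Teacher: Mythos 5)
Your proof is correct: the reduction of the general term to $\epsilon^{2-j}y^{j}$ via $\epsilon^{(2-j)/2}\le\epsilon^{2-j}$ (valid precisely because $j\ge 2$ and $0<\epsilon<1$), the exponent bookkeeping $j=(1-\theta)+\theta k$, $2-j=(1-\theta)+(2-k)\theta$ with $\theta=\frac{j-1}{k-1}$, and the weighted AM--GM step all check out, and summing over $j$ indeed gives the explicit constant $C=k$. Note, however, that the paper does not prove this statement at all: it is quoted verbatim as Lemma 1 of the Albert--Bona reference ``for the sake of completeness,'' and the authors simply invoke it in the proof of Theorem \ref{thrm:last}. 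So there is no in-paper argument to compare against; what your write-up adds is a short, self-contained elementary proof (Young's inequality after interpolating the exponents between the endpoint terms $\epsilon y$ and $\epsilon^{2-k}y^{k}$), together with the observation that the hypothesis $\epsilon\in(0,1)$ is genuinely needed, e.g.\ for $k=3$ the term $\epsilon^{-1/2}y^{3}$ with $y=\epsilon\to\infty$ defeats any fixed $C$. Your alternative suggestion (maximizing the ratio in $y$ for fixed $\epsilon$) would also work, but the interpolation route is the cleaner one and is in the spirit of the original source.
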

Here we will use this lemma for $k=3$. The second lemma, again adapted to our needs, is a Gronwall's type inequality.
\begin{lemma}{(Lemma 2 of \cite{AB1991})}\label{lem:lemmab2}
Let $A, B>0$ be given. Then there is a maximal time $T$ and a constant $C>0$ independent of $A$ and $B$ such that if $y(t)$ is any non-negative differentiable function defined on $[0,T]$ satisfying
$$\begin{aligned}
&\frac{d}{dt}y^2(t)\leq A y(t) + B y^3(t),\qquad 0\leq t\leq T\ ,\\
&y(0)=0\ ,
\end{aligned}$$
then 
$$y(t)\leq CA t, \quad \text{ for }\quad 0\leq t\leq T\ .$$
\end{lemma}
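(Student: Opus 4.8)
\emph{Proof proposal.} The plan is to pass to the integrated form of the inequality, run a bootstrap via the running maximum of $y$, and then read off the linear bound from a quadratic dichotomy. First I would integrate $\frac{d}{dt}y^2 \le Ay + By^3$ over $[0,t]$ and use $y(0)=0$ to obtain $y^2(t) \le \int_0^t (A y + B y^3)\,ds$. Introducing $M(t):=\max_{0\le s\le t} y(s)$ — which is continuous because $y$ is, nondecreasing, and satisfies $M(0)=0$ — I would estimate $y^3(s)=y(s)y^2(s)\le M(t)^2 y(s)$ for $s\le t$ and $\int_0^t y\,ds \le t M(t)$, so that $y^2(t) \le (A + B M(t)^2)\,t\,M(t)$. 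Since the right-hand side is nondecreasing in $t$, taking $\sup_{s\le t}$ on the left gives $M(t)^2 \le (A + B M(t)^2)\, t\, M(t)$; dividing by $M(t)$ on the set where it is positive (the estimate being trivially true where $M(t)=0$) yields the closed scalar inequality $M(t) \le A t + B t\, M(t)^2$, valid for all $t\in[0,T]$.

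The second step is to treat $B t\, M(t)^2 - M(t) + A t \ge 0$ as a quadratic inequality in the unknown $M(t)$. For $0<t<\tfrac{1}{2\sqrt{AB}}$ the discriminant $1-4ABt^2$ is strictly positive, the two roots $m_\pm(t) = \tfrac{1\pm\sqrt{1-4ABt^2}}{2Bt}$ are real and positive, and the inequality forces either $M(t)\le m_-(t)$ or $M(t)\ge m_+(t)$. Because $M(0)=0$ while $m_+(t)\to+\infty$ as $t\to 0^+$, the lower alternative $M(t)\le m_-(t)$ holds for $t$ small; a continuation argument then extends this to all of $[0,T]$ with $T:=\tfrac{1}{2\sqrt{AB}}$, the would-be first crossing to the upper branch being ruled out by the intermediate value theorem since $m_-(t)<m_+(t)$ strictly on $(0,T)$. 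Finally, rationalizing gives $m_-(t) = \tfrac{2At}{1+\sqrt{1-4ABt^2}} \le 2At$, whence $y(t) \le M(t) \le 2At$ on $[0,T]$. This is exactly the assertion with $C=2$ (manifestly independent of $A$ and $B$) and $T = \tfrac{1}{2\sqrt{AB}}$, which is also the natural maximal time since at $t=T$ the two roots coalesce at the value $2AT$.

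I expect the only genuinely delicate point to be the branch selection in the second step: one must justify that $t\mapsto M(t)$ stays on the smaller root $m_-$ and cannot jump to $m_+$. This rests on the strict separation $m_-(t)<m_+(t)$ throughout $(0,\tfrac{1}{2\sqrt{AB}})$ combined with continuity of $M$; the remaining ingredients — the integration, the passage to the running maximum, and the algebraic simplification of $m_-$ — are routine. (This is the argument of Lemma~2 in \cite{AB1991}, reproduced here in the form we need.)
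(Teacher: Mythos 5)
Your proposal is correct, but note that the paper itself does not prove this lemma: it is imported verbatim from Albert and Bona \cite{AB1991} ``for the sake of completeness,'' so there is no internal proof to compare against. On its own merits your argument is sound and self-contained: integrating and passing to the running maximum gives $M(t)\le At+Bt\,M(t)^2$, and the quadratic dichotomy together with the first-crossing/continuity argument (using $M(0)=0$, $m_+(t)\to\infty$ as $t\to0^+$, and the strict gap $m_-(t)<m_+(t)$ on $(0,\tfrac{1}{2\sqrt{AB}})$) correctly pins $M$ to the lower branch, yielding $y(t)\le M(t)\le m_-(t)=\tfrac{2At}{1+\sqrt{1-4ABt^2}}\le 2At$, i.e.\ $C=2$ independent of $A,B$ and $T=\tfrac{1}{2\sqrt{AB}}$. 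Two minor remarks: (i) your explicit $T$ shows the ``maximal time'' necessarily depends on $A$ and $B$, which is consistent with the application in Theorem \ref{thrm:last}, where $A=O(\varepsilon)$ and $B=O(\varepsilon^{-1})$ so that $T=O(1)$ uniformly in $\varepsilon$; (ii) the integration step is cleanest if phrased as $F(t)=y^2(t)-\int_0^t\bigl(Ay+By^3\bigr)\,ds$ satisfying $F'\le 0$, $F(0)=0$, hence $F\le 0$, which sidesteps any integrability question about $\tfrac{d}{dt}y^2$.
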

Let $f(x)$, $g(x)$ sufficiently smooth functions such that the initial value problem  (\ref{eq:boussinesqs2}) with initial data $(\eta^\varepsilon(x,0),u^\varepsilon(x,0))=(f(x),g(x))$ has a unique solution $(\eta^\varepsilon,u^\varepsilon)\in C^1([0,T],H^p\times H^q)$ with $p=1,q=2$ for $\varepsilon>0$, and $p=q=2$ for $\varepsilon=0$, at least for times $t\in [0,T]$ for $T>0$. In the previous notation $H^s$ denotes the usual Sobolev space of weakly differentiable functions of order $s$ with associated norm $\|\cdot\|_s$. The usual $L^2$ space is the space $H^0$ with usual norm $\|\cdot\|$. Then for $\varepsilon>0$ the function $(h,w)=(\eta^\varepsilon-\eta^0,u^\varepsilon-u^0)$ satisfies the system
\begin{align}
&h_t+w_x+(hu^\varepsilon+\eta^0 w)_x=0\ , \label{eq:erreq1}\\
& w_t+h_x+u^\varepsilon w_x+w u^0_x-\delta w_{xxt}=\varepsilon u^\varepsilon_{xx}\ . \label{eq:erreq2}
\end{align}
Under the prescribed assumptions for the solutions of system (\ref{eq:boussinesqs2}), we have the following:
\begin{theorem}\label{thrm:last}
The error functions $(h,w)$ corresponding to the solutions of system (\ref{eq:boussinesqs2}) for $\varepsilon>0$ and $\varepsilon=0$ are such that 
$$\sqrt{\|h\|^2+\|w\|^2+\|w_x\|^2}=O(\varepsilon t)\ ,$$
for all $t\in [0,T]$.
\end{theorem}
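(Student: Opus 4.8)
\emph{Proof strategy.} The plan is to carry out an energy estimate directly on the error system (\ref{eq:erreq1})--(\ref{eq:erreq2}), using the dispersive term $\delta w_{xxt}$ to supply control of $\|w_x\|$, and then to convert the resulting differential inequality into the sharp bound via Lemmas \ref{lem:lemmab1} and \ref{lem:lemmab2}. Since $\delta>0$ is fixed, it is equivalent to estimate
\[
E(t):=\|h(\cdot,t)\|^2+\|w(\cdot,t)\|^2+\delta\|w_x(\cdot,t)\|^2,
\]
which is comparable to $\|h\|^2+\|w\|^2+\|w_x\|^2$ and satisfies $E(0)=0$ because both solutions emanate from the same data $(f,g)$. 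First I would multiply (\ref{eq:erreq1}) by $h$ and (\ref{eq:erreq2}) by $w$, integrate over $\mathbb{R}$ (all boundary terms vanish, by the decay at $\pm\infty$ inherited from the Sobolev regularity of the solutions), and add the two identities. The term $-\delta\int w_{xxt}w\,dx$ contributes $\tfrac{\delta}{2}\tfrac{d}{dt}\|w_x\|^2$, while the linear coupling disappears since $\int(w_x h+h_x w)\,dx=\int(hw)_x\,dx=0$.

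Integrating by parts in the convective terms---so that no derivative of $h$ remains, which is essential (see the last paragraph)---this should leave an identity of the form
\[
\tfrac12\tfrac{d}{dt}E=-\tfrac12\int u^\varepsilon_x h^2\,dx-\int\eta^0_x w h\,dx-\int\eta^0 w_x h\,dx+\tfrac12\int u^\varepsilon_x w^2\,dx-\int u^0_x w^2\,dx+\varepsilon\int u^\varepsilon_{xx}w\,dx.
\]
Each of the five quadratic terms can then be bounded by $C\,E$: invoking the one-dimensional embedding $H^2\hookrightarrow W^{1,\infty}$ together with the standing hypotheses $(\eta^\varepsilon,u^\varepsilon)\in C^1([0,T];H^1\times H^2)$ and $(\eta^0,u^0)\in C^1([0,T];H^2\times H^2)$, the quantities $\|u^\varepsilon_x\|_\infty$, $\|u^0_x\|_\infty$, $\|\eta^0_x\|_\infty$, $\|\eta^0\|_\infty$ and $\|u^\varepsilon_{xx}\|$ are bounded on $[0,T]$ by a constant depending only on $T$, $\delta$ and the solution norms. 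The only term that needs the dispersive gain is $|\int\eta^0 w_x h\,dx|\le\|\eta^0\|_\infty\|w_x\|\,\|h\|\le\tfrac{C}{\delta}\,(\delta\|w_x\|^2)+C\|h\|^2\le C'E$, while the forcing obeys $|\varepsilon\int u^\varepsilon_{xx}w\,dx|\le\varepsilon\|u^\varepsilon_{xx}\|\,\|w\|\le C\varepsilon\sqrt{E}$. Collecting these estimates produces the differential inequality
\[
\tfrac{d}{dt}E\le C_1 E+C_2\,\varepsilon\sqrt{E},\qquad 0\le t\le T.
\]

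To finish, set $y=\sqrt{E}$, so that $\tfrac{d}{dt}y^2\le C_1 y^2+C_2\varepsilon\,y$ with $y(0)=0$; one may assume $\varepsilon\in(0,1)$, the assertion being immediate otherwise. By Lemma \ref{lem:lemmab1} with $k=3$---equivalently, by the arithmetic--geometric inequality $y^2\le\tfrac12\varepsilon\,y+\tfrac12\varepsilon^{-1}y^3$---the right-hand side is dominated by $A\,y+B\,y^3$ with $A=C\varepsilon$ and $B=C\varepsilon^{-1}$. Lemma \ref{lem:lemmab2} then gives $y(t)\le C A t=O(\varepsilon t)$ on $[0,T]$, and since $\sqrt{\|h\|^2+\|w\|^2+\|w_x\|^2}\le C\,y(t)$ the theorem follows.

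The main obstacle is the bookkeeping in the second step rather than any deep estimate. The bound closes with the claimed $O(\varepsilon t)$ scaling only if every term on the right-hand side of the energy identity is either quadratic in $(h,w)$---so that it is absorbed by Gronwall starting from $E(0)=0$---or carries the explicit factor $\varepsilon$ (the lone forcing term); a convective term left un-integrated and estimated crudely by Cauchy--Schwarz would instead contribute something linear in the error with no power of $\varepsilon$, which would only give $\sqrt{E}=O(t)$. It is precisely this structure---a dissipative-looking inequality $\tfrac{d}{dt}E\lesssim E+\varepsilon\sqrt{E}$---that Lemmas \ref{lem:lemmab1}--\ref{lem:lemmab2} are tailored to turn into the linear-in-time, $\varepsilon$-small conclusion.
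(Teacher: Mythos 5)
Your proposal is correct and follows essentially the same route as the paper: an $L^2$-energy estimate on the error system (with the $\delta w_{xxt}$ term supplying $\|w_x\|$ control and the $\varepsilon u^{\varepsilon}_{xx}$ forcing carrying the factor $\varepsilon$), leading to $\tfrac{d}{dt}y^2\lesssim \varepsilon y+y^2$ and then the conclusion via Lemmas \ref{lem:lemmab1} and \ref{lem:lemmab2}. The only deviations (cancelling the linear coupling $\int (hw)_x\,dx=0$ instead of bounding the two terms separately, and estimating $\varepsilon\int u^{\varepsilon}_{xx}w\,dx$ without integrating by parts) are cosmetic and do not change the argument.
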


\begin{proof}
In this proof we will write $\lesssim$ to denote $\leq C$ where $C$ is a positive constant independent of $\varepsilon$.
If we denote 
$$y(t)=\sqrt{\|h\|^2+\|w\|^2+\|w_x\|^2}\ ,$$ 
then, $y(0)=0$ since we assumed the same initial data for both systems. Multiply (\ref{eq:erreq1}) with $h$ and integrate over $\mathbb{R}$ to obtain
$$
\begin{aligned}
\frac{d}{dt}\left(\frac{1}{2}\int_{-\infty}^\infty h^2~dx\right)&=-\int_{-\infty}^\infty w_x h +\frac{1}{2}h^2u^\varepsilon_x+(\eta^0 w)_x h ~dx\\
&\lesssim \|w_x\|~\|h\|+\|h\|^2\|u^\varepsilon_x\|_\infty+\|\eta^0_x\|_\infty\|w\|\|h\|+\|\eta^0\|_\infty\|w_x\| \|h\|\\
&\lesssim \|w\|^2+ \delta \|w_x\|^2+\|h\|^2\ .
\end{aligned}
$$
Similarly we have
{\small
$$
\begin{aligned}
\frac{d}{dt}\left(\frac{1}{2}\int_{-\infty}^\infty w^2+\delta w_x^2  ~dx \right)&= \int_{-\infty}^\infty hw_x-hww_x-u_xw^2~dx-\varepsilon \int_{-\infty}^\infty u_xw_x~ dx\\
&\lesssim \|h\|\|w_x\|+\|h\|\|w\|_\infty\|w_x\|+\|u_x\|_\infty\|w\|^2+\varepsilon \|u_x\|_\infty\|w_x\|\\
&\lesssim \|h\|^2 +\|w\|^2 +\delta \|w_x\|^2 + \varepsilon \|w_x\|\ .
\end{aligned}
$$ 
}
The previous inequalities imply
$$\frac{d}{dt} y^2(t)\lesssim\varepsilon y(t)+y^2(t) \lesssim \varepsilon y(t)+\frac{1}{\varepsilon}y^3(t)\ ,$$
where the last inequality is proved in Lemma \ref{lem:lemmab1} (Lemma 1 of \cite{AB1991}). Then using the Gronwall type inequality from Lemma \ref{lem:lemmab2} (Lemma 2 of \cite{AB1991}) we get
$$y(t)\lesssim \varepsilon t\ .$$ 
Note that the various constants $C$ in the previous inequalities are independent of $\varepsilon$ but depend only on $\delta$ and on bounds of the solutions of the two systems. 
\end{proof}

We illustrate this theorem with a set of numerical experiments comparing the dispersive shock with oscillatory shocks of the dissipative Boussinesq system with $\delta=1$ and $\varepsilon=0.1$ and $0.01$, respectively, for the same initial conditions. We consider initial data  $\eta(x,0)$ to be a smooth approximation of Riemann data (see Figure \ref{fig:comparison} for time $t=0$)  and $u(x,0)=0$. For completeness we present the corresponding solution of the nonlinear shallow water equations with $\varepsilon=\delta=0$. 
\begin{figure}[ht!]
\includegraphics[width=\textwidth]{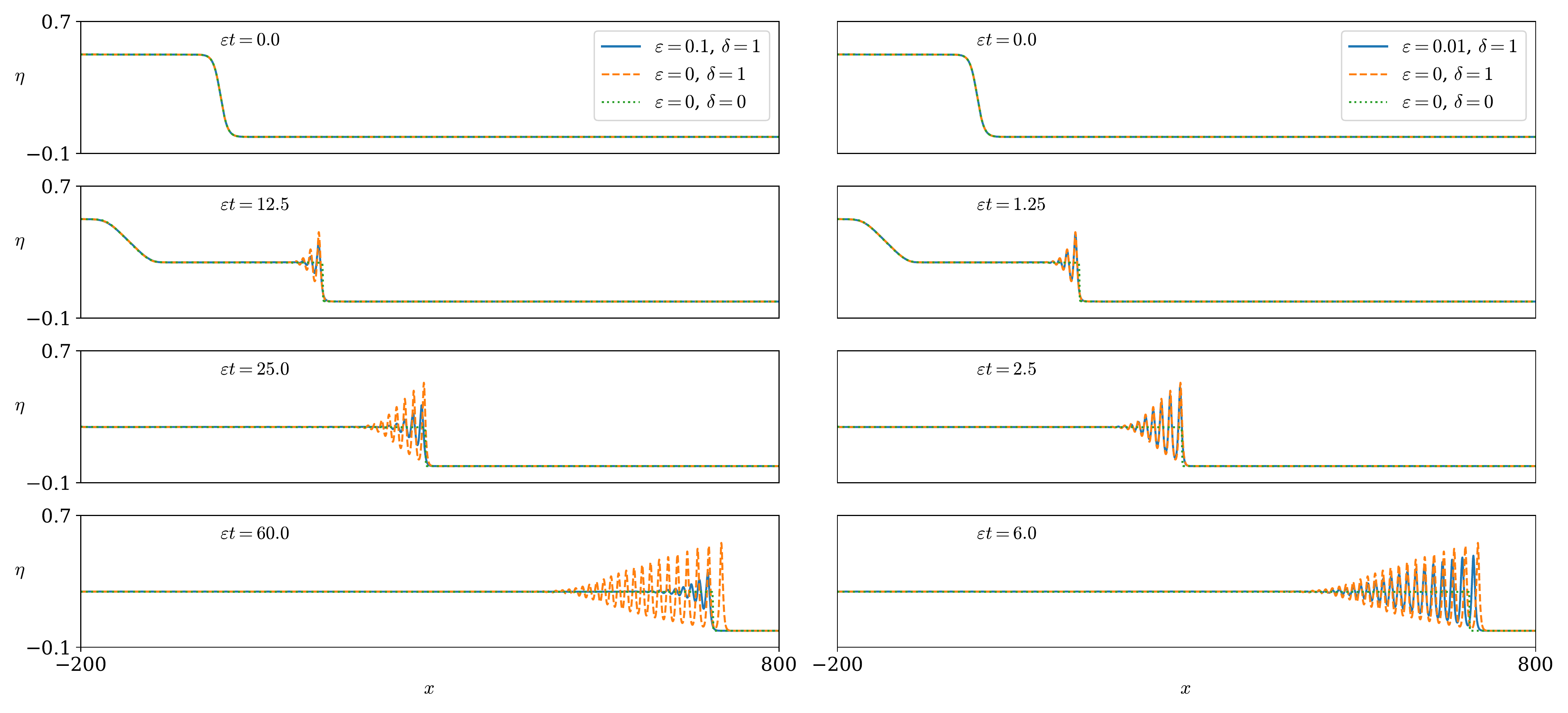}
\caption{Oscillatory shock waves vs dispersive shock waves}\label{fig:comparison}
\end{figure}
\begin{figure}[ht!]
\includegraphics[width=\textwidth]{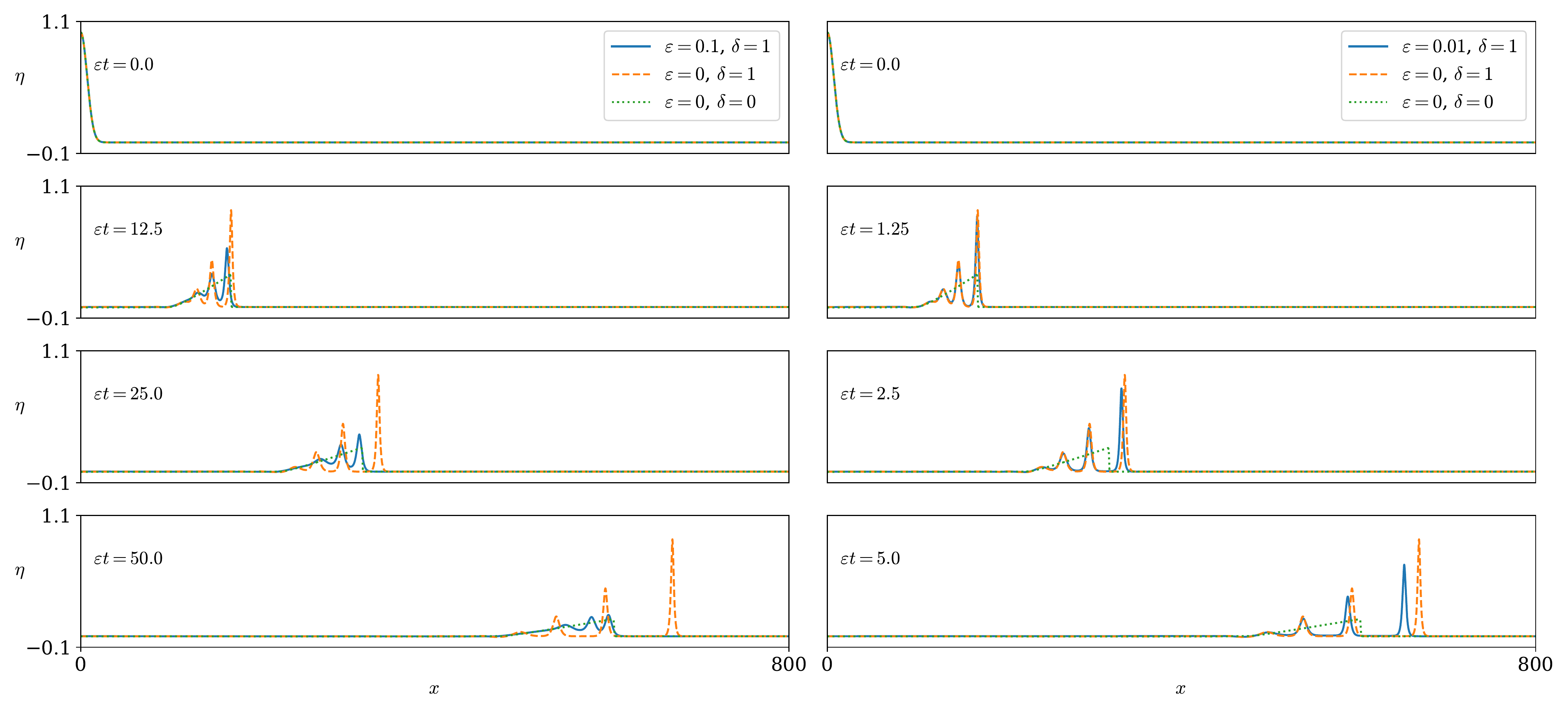}
\caption{Comparison between dissipative and non-dissipative Boussinesq system}\label{fig:comparison2}
\end{figure}
These data generate two expanding waves that propagate in different directions: An oscillatory wave that propagates to the right and a rarefaction wave propagating to the left. When $\varepsilon>0$ the right-traveling waves tend to become oscillatory shock waves while the left-traveling comprised indistinguishable rarefaction waves. The error between the oscillatory shock waves and the dispersive shock waves emanating from the same initial conditions when $\varepsilon=0$ is $O(\varepsilon t)$. This is demonstrated in Figure \ref{fig:comparison} where we observe that for $\varepsilon=0.01$ the two solutions are closer for larger times. For example, it is hard to observe differences between the two solutions at $\varepsilon t=2.5$, while large differences can be observed at $\varepsilon t=6$. On the other hand, when $\varepsilon=0.1$ the two solutions start diverging at an earlier time $t$. We also observe that the classical shock wave of the nonlinear shallow water equations is formed at about the same time with the oscillatory shock of $\varepsilon=0.1$ while  the second oscillatory shock wave needs additional time to get formed and thus leads the classical shock wave. For the numerical solution of the Boussinesq system we considered the numerical method of \cite{AD2012, ADM2010ii} while for the shallow water equations we used the numerical method of \cite{dkm2011}. In all cases we used $\Delta x= 0.25$ and $\Delta t=0.025$ for stepsizes of the numerical methods. The interval we used was $[-800,800]$ but here we only present the results in $[-200,800]$.

We close this section with a numerical  experiment that describes the generation of series of waves from a Gaussian initial condition. Specifically, we consider the same systems as before  but with initial data $\eta(x,0)=e^{-x^2/100}$, $u(x,0)=0$. It is known that in the absence of dissipation and from such initial conditions a series of solitary waves will be generated followed by dispersive tails. Figure \ref{fig:comparison2} shows the solution that propagates only to the right in the interval $[0,800]$ while the solution is symmetric in $[-800,800]$.  As we observe in Figure \ref{fig:comparison2}, the resulting dissipative solutions do not comprised traveling waves anymore but diffusive pulses followed by diffusive dispersive tails. The error estimate again predicts the correct behavior of the various dissipative systems, which is inherited by the stable character of the non dissipative solitary waves. This experiment indicates that traveling waves of system (\ref{eq:boussinesqs2}) can be generated mainly by Riemann initial data. On the other hand, shock waves of the nonlinear shallow water equations  can be generated even by localized initial conditions like the Gaussian one used here.

\section{Conclusions}

Peregrine's system was originally derived to describe undular bores (also known as positive surges) as well as classical solitary waves. Although Peregrine's system along with other Boussinesq type systems describe bidirectional  propagation of nonlinear and dispersive waves, they do not possess traveling wave solutions that resemble undular bores. Apparently, dissipation is equally important to the dispersion and nonlinearity for the development of such traveling wave solutions.

In this paper we studied the conditions for the existence of dispersive and regularized shock waves for a dissipative Peregrine system, and we proved existence and uniqueness (up to horizontal translations) of such solutions. We showed that these waves can describe undular bores generated in laboratory experiments. We closed this paper by proving error estimates between the dissipative and the non-dissipative Peregrine systems. We conclude that the dispersive shock waves obtained by studying the non-dissipative Peregrine system can be accurate approximations of undular bores only for certain time scales.

\section*{Acknowledgments}

DM thanks KAUST for their hospitality during a visit when this work was initiated.

\appendix

\section{Derivation of dissipative Boussinesq equations}\label{appA}

In this appendix we outline a derivation of the dissipative Boussinesq system (\ref{eq:boussinesqs}) found in \cite{BCS2002,DD2007}. The starting point is the simplified Navier-Stokes equations of \cite{JW2004} that takes into account effects of  weak dissipation due to viscosity, without bottom friction and with potential flow, an idea also developed in \cite{WJ2006,DDZ2008}.  We model long water waves of characteristic wavelength $L$ and amplitude $A$ traveling over constant depth $D$,  in scaled space $x$, $z$ and time $t$ variables $x=x'/L$, $z=z'/D+1$, $t= \sqrt{gD}t'/L$,
where $x', z', t'$ denote the original (unscaled) variables. The scaled free-surface elevation above the undisturbed level of the water $\eta$ and velocity potential $\phi$ are formulated as 
$\eta(x,t)=\eta'(x',t')/A$, $\phi(x,z,t)=\sqrt{gD}\phi'(x',z',t')/gAL$.
In the long wave regime we assume that the ratios $\mu=D/L\ll 1$ and $\epsilon=A/D\ll 1$, while the Stokes number $S=\epsilon/\mu^2$ is $O(1)$.
Note that the bottom in the scaled vertical space variable $z$ is at $z=0$ and the free surface at $z=1+\epsilon\eta$.
For $x\in \mathbb{R}$, the scaled simplified Navier-Stokes equations of \cite{JW2004} for potential flow, along with the boundary conditions at the free surface and at the flat sea floor form the system of equations \cite{DDZ2008}
\begin{align}
&\phi_{xx}+\frac{1}{\mu^2}\phi_{zz}=0,\quad\text{ for } 0<z<1+\epsilon\eta\ ,\label{eq:A1}\\
&\eta_t+\epsilon \phi_x\eta_x-\frac{1}{\mu^2}\phi_z=0, \quad\text{ for }  z=1+\epsilon\eta\ ,\label{eq:A2}\\
&\phi_t+\eta+\frac{1}{2}\epsilon \phi_x^2+\frac{1}{2}\frac{\epsilon}{\mu^2}\phi_z^2+\frac{\epsilon}{\mu^2}\kappa\phi_{zz}=0,\quad\text{ for } z=1+\epsilon\eta \ ,\label{eq:A3}\\
&\phi_{zz}=0,\quad\text{ for } z=1+\epsilon\eta\ ,\label{eq:A4}\\
&\phi_z=0,\quad\text{ for }\quad z=0\ .\label{eq:A5} 
\end{align}
In the previous notation, $\kappa$ is proportional to the kinematic viscosity parameter of the Navier-Stokes equations.
Because of the incompressibility condition (\ref{eq:A1}) and the boundary condition (\ref{eq:A5}) the formal asymptotic expansion of the velocity potential takes the form \cite{BCS2002}
$$\begin{aligned}
\phi(x,z,t)&=\sum_{k=0}^\infty \frac{(-1)^k\mu^{2k}}{(2k)!}\frac{\partial^{2k}F(x,t)}{\partial x^{2k}}z^{2k}\\
&= F(x,t)-\frac{\mu^2}{2}z^2F_{xx}(x,t)+\frac{\mu^4}{24}z^4F_{xxxx}(x,t)+O(\mu^6)\ ,
\end{aligned}$$ where $F(x,t)=\phi(x,0,t)$ is the velocity potential evaluated at the bottom $z=0$. Substitution into the surface boundary condition (\ref{eq:A2}) and the Bernouli equation (\ref{eq:A3}) leads to the following system of equations for the scaled free-surface elevation $\eta$ and the horizontal velocity at the bottom $w=F_x$,
\begin{equation}\label{eq:Asys}
\begin{aligned}
&\eta_t+w_x+\epsilon (\eta w)_x-\frac{1}{6}\mu^2w_{xxx}=\text{terms quadratic in $\epsilon$, $\mu^2$}\\
&w_t+\eta_x+\epsilon ww_x-\frac{1}{2}\mu^2 w_{xxt}-\epsilon\kappa w_{xx}=\text{terms quadratic in $\epsilon$, $\mu^2$}
\end{aligned}
\end{equation}
We then evaluate the horizontal velocity $u(x,z,t)=\phi_x(x,z,t)$ at depth $z=\sqrt{3}/3$ to obtain
$u(x,t)=u(x,\sqrt{3}/3,t)=w(x,t)-\frac{1}{6}\mu^2w_{xx}(x,t)+O(\mu^4)$.
Solving the last relationship for $w$ taking into account that $u=w+O(\mu^2)$ we obtain 
\begin{equation}\label{eq:hv1}
w=u+\frac{1}{6}\mu^2u_{xx}+O(\mu^4)\ .
\end{equation}
Substituting (\ref{eq:hv1}) into the system (\ref{eq:Asys}), we obtain the equations
\begin{equation}\label{eq:Asys2}
\begin{aligned}
&\eta_t+u_x+\epsilon (\eta u)_x=\text{terms quadratic in $\epsilon$, $\mu^2$}\\
&u_t+\eta_x+\epsilon uu_x-\frac{1}{3}\mu^2 u_{xxt}-\epsilon\kappa u_{xx}=\text{terms quadratic in $\epsilon$, $\mu^2$}
\end{aligned}
\end{equation} 
Dropping the high order terms, changing the variables into the original, unscaled ones, and setting the parameter of the dissipative term equal to $\varepsilon$ we obtain the dissipative Boussinesq system (\ref{eq:boussinesqs}).

\bibliographystyle{plain} % We choose the "plain" reference style

\end{document}